\theoremstyle{plain}
\newtheorem{theorem}{Theorem}[section]
\newtheorem{thm}{Theorem}[section]
\newtheorem{prop}[theorem]{Proposition}
\theoremstyle{definition}
\theoremstyle{definition}
\newtheorem{remarks}[theorem]{Remarks}
\newtheorem{rem}[theorem]{Remark}
\numberwithin{equation}{section}
\numberwithin{figure}{section}
\numberwithin{table}{section}
\newcommand{\C}{\mathbb{C}}
\newcommand{\M}{\mathbb{M}}
\newcommand{\Q}{\mathbb{Q}}
\newcommand{\R}{\mathbb{R}}
\newcommand{\Z}{\mathbb{Z}}
\newcommand{\cM}{\mathcal{M}}
\newcommand{\cO}{\mathcal{O}}
\def\ms{\mathscr}
\def\bbm{\mathbbm}
\def\span{\operatorname{span}}
\def\M{\operatorname{M}}
\def\wt{\widetilde}
\providecommand{\abs}[1]{\left\vert#1\right\vert}
\providecommand{\bmat}[1]{\begin{bmatrix}#1\end{bmatrix}}
\providecommand{\bmat}[1]{\begin{bmatrix}#1\end{bmatrix}}
\def\build#1#2\fin{\mathrel{\mathop{\kern0pt#1}\limits#2}}
\newcommand{\divv}{\mathop{\rm div}\nolimits}
\newcommand{\grad}{\mathop{\rm grad}\nolimits}
\newcommand{\End}{\mathop{\rm End}\nolimits}
\newcommand{\Gal}{\mathop{\rm Gal}\nolimits}
\title{One can't hear orientability of surfaces}
\author{Pierre B\'erard  and David L. Webb}
\date{\today}
\begin{document}
\maketitle

\begin{abstract}
The main result of this paper is that one cannot hear orientability of a surface with boundary. More precisely, we construct two isospectral flat surfaces with boundary with the same Neumann spectrum, one orientable, the other non-orientable. For this purpose, we apply Sunada's  and Buser's methods in the framework of orbifolds. Choosing a symmetric tile in our construction, and adapting a folklore argument of Fefferman, we also show that the surfaces have different Dirichlet spectra. These results were announced in the {\it C. R. Acad. Sci. Paris S\'er. I Math.}, volume 320 in 1995, but the full proofs so far have only circulated in preprint form.
\end{abstract}

Keywords: {Spectrum, Laplacian, Isospectral surfaces, Orientability}\\[5pt]
MSC~2010: {58J50, 58J32}\\[5pt]

\tableofcontents

\section{Introduction}

Let $M$ be a compact Riemannian manifold with boundary. The {\it spectrum\/} of $M$ is the sequence of eigenvalues of the Laplace-Beltrami operator $\Delta  f= -\divv(\grad f)$ acting on smooth functions on $M$; when $\partial M \ne \varnothing$, one can impose either Dirichlet boundary conditions on the function $f$ (i.e., $f|_{\partial M}=0$) or Neumann boundary conditions (the normal derivative $\partial f/\partial n$ vanishes on $\partial M$). Mark Kac's classic paper \cite{K} has stimulated a great deal of interest in the question of what geometric or topological properties of $M$ are determined by its spectrum. In \cite{GWW}, it was shown that, in Kac's terminology, one cannot hear the shape of a drum, or of a bell:  that is, there exist pairs of nonisometric planar domains that have the same spectra, for either Dirichlet (in the case of a drum) or Neumann (in the case of a bell) boundary conditions. The construction uses an adaptation to orbifolds of Sunada's technique for constructing isospectral manifolds. For many other examples, see \cite{BCDS}.

This paper uses orbifold techniques to exhibit examples of pairs of Neumann isospectral flat surfaces with boundary, one of which is orientable and the other nonorientable. Performing the same construction using a tile with an additional symmetry, we exhibit Neumann isospectral bordered surfaces that are not Dirichlet isospectral by adapting an argument of C.~Fefferman.

To our knowledge, these are the first confirmed examples in which Neumann isospectrality holds but Dirichlet isospectrality fails.

This paper is organized as follows. Section~\ref{IsospectralManifolds} briefly reviews the Sunada construction of isospectral manifolds; this is used to construct the Neumann isospectral surfaces $M_1$ and $M_2$ in section~\ref{Construction}. Section~\ref{Transplantation} summarizes some representation-theoretic calculations that furnish a computation of the most general ``transplantation'' map, which transplants a Neumann eigenfunction on $M_1$ to a Neumann eigenfunction on $M_2$ with the same eigenvalue; as in \cite{BCDS} and \cite{Be3}, transplantation of eigenfunctions affords an elementary visual proof of the isospectrality. In section~\ref{Fefferman}, we give an unpublished argument of C.~Fefferman; while in section~\ref{NeumannNotDirichlet}, by modifying our construction slightly, we show that there are Neumann isospectral flat surfaces with boundary $M_1$ and $M_2$ that are not Dirichlet isospectral by adapting Fefferman's argument.  Section~\ref{InaudSing} contains some concluding observations.

Since the constructions and proofs are quite elementary, the exposition is aimed at a general reader and is essentially self-contained, although reference to \cite{GWW} may be helpful.  These results were announced in \cite{BW} and circulated in an MSRI preprint \cite{BW1}; this paper is a revision of \cite{BW1} and contains the details of the results announced in \cite{BW} (along with some improvements), after a long delay.

\medskip
\emph{Acknowledgments.}~ We wish to thank Carolyn Gordon for helpful discussions, Dorothee Sch\"uth for carefully reading a preliminary version, and Peter Doyle for discussing aspects of this work.  We are grateful to Bob Brooks for communicating Fefferman's argument. The first author acknowledges the hospitality of IMPA, where some of this research was conducted.  The second author is grateful to MSRI for its support and for its congenial atmosphere.  Both \cite{BW} and \cite{BW1} acknowledged support from NSF grants DMS-9216650 and DMS 9022140, from CNRS (France), and  from CNPq (Brazil). The authors wish to thank the referee for his comments.

\section{Isospectral manifolds}\label{IsospectralManifolds}

Let $G$ be a finite group, and let $\Gamma$ be a subgroup of $G$. Then the left action of $G$ on the coset space $G/\Gamma$ determines a linear representation $\C[G/\Gamma]$ of $G$, where $\C[G/\Gamma]$ denotes a complex vector space with basis $G/\Gamma $; the $G$-module $\C[G/\Gamma]$ can be viewed as the representation $(\bbm{1}_\Gamma)\negthickspace\uparrow^G_\Gamma=\C[G]\otimes_{\C[\Gamma]}\C$ of $G$ induced from the one-dimensional trivial representation $\bbm{1}_{\Gamma}$ of the subgroup $\Gamma$.

Now let $\Gamma _1$ and $\Gamma _2$ be subgroups of $G$. Then $(G,\Gamma_1,\Gamma _2)$ is called a {\it Gassmann-Sunada triple\/} if  $\C[G/\Gamma_1]$ and $\C[G/\Gamma_2]$ are isomorphic representations of $G$. The formula for the character of an induced representation (see, e.g., \cite{Se}, section 7.2 or \cite{JamesLiebeck}, 21.19) shows that isomorphism of the induced representations $(\bbm{1}_{\Gamma_1})\negthickspace\uparrow_{\Gamma_1}^G$ and $(\bbm{1}_{\Gamma_2})\negthickspace\uparrow_{\Gamma_2}^G$ is equivalent to the assertion that $\Gamma _1$ and $\Gamma _2$ are {\it elementwise conjugate\/} or \emph{almost conjugate} subgroups of $G$ --- that is, there exists a bijection $\Gamma _1 \to \Gamma _2$ carrying each element $\gamma \in\Gamma _1$ to a conjugate element $g\gamma g^{-1} \in \Gamma _2$, where the conjugating element $g\in G$ may depend upon $\gamma $. See R.~Perlis \cite{P} or the papers of R.~Guralnick (e.g., \cite{Gu}) for many examples.

Elementwise conjugate subgroups that are not conjugate as subgroups were used by Gassmann \cite{Ga} to exhibit pairs of nonisomorphic number fields having the same zeta function and hence the same arithmetic. The analogy of Galois theory with covering space theory led T.~Sunada to apply Gassmann-Sunada triples to develop a very powerful technique for constructing isospectral Riemannian manifolds that are not isometric, as follows.  (Conjugate subgroups are trivially almost conjugate; however, in that case the two Riemannian manifolds arising from Sunada's Theorem are isometric.  Thus we seek nonconjugate pairs of almost-conjugate subgroups.)

\begin{theorem}[Sunada \cite{Su}] Let $M$ be a compact Riemannian manifold with boundary, and let $G$ be a finite group acting on $M$ by isometries. Suppose that $(G,\Gamma _1,\Gamma _2)$ is a Gassmann-Sunada triple, with $\Gamma _1$ and $\Gamma _2$ acting freely on $M$. Then the quotient manifolds $M_1 = \Gamma _1\backslash M$ and $M_2 = \Gamma _2\backslash M$ are isospectral. (If $\partial M \ne \varnothing$, then either Dirichlet or Neumann boundary conditions can be imposed.)
\end{theorem}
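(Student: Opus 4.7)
The plan is to prove isospectrality by showing, for every eigenvalue $\lambda$, that the $\lambda$-eigenspaces of the Laplacian on $M_1$ and $M_2$ have the same dimension. The strategy has two ingredients: a geometric identification of eigenfunctions on $M_i$ with $\Gamma_i$-invariant eigenfunctions on $M$, and then a representation-theoretic count using Frobenius reciprocity together with the Gassmann-Sunada hypothesis.

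For the geometric step, since $\Gamma_i$ acts freely and isometrically on $M$, the projection $\pi_i\colon M \to M_i = \Gamma_i\backslash M$ is a Riemannian covering, so $M_i$ is a smooth Riemannian manifold with boundary and the Laplacian is intertwined by $\pi_i^*$. Pullback therefore identifies eigenfunctions on $M_i$ at eigenvalue $\lambda$ with $\Gamma_i$-invariant eigenfunctions on $M$ at the same eigenvalue, for either boundary condition: Dirichlet transfers because $\pi_i^{-1}(\partial M_i) = \partial M$, and Neumann transfers because the local isometry $\pi_i$ carries outward unit normals to outward unit normals, so $\partial/\partial n$ pulls back to $\partial/\partial n$. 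Writing $E_\lambda(N)$ for the $\lambda$-eigenspace of the Laplacian on such an $N$, one obtains an isomorphism $E_\lambda(M_i) \cong E_\lambda(M)^{\Gamma_i}$.

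For the representation-theoretic step, the full group $G$ acts by isometries on $M$ preserving the boundary, so each finite-dimensional space $E_\lambda(M)$ is a representation of $G$. Frobenius reciprocity yields
$$\dim E_\lambda(M)^{\Gamma_i} \;=\; \dim \mathrm{Hom}_{\Gamma_i}\bigl(\bbm{1}_{\Gamma_i},\, E_\lambda(M)\bigr) \;=\; \dim \mathrm{Hom}_{G}\bigl((\bbm{1}_{\Gamma_i})\negthickspace\uparrow_{\Gamma_i}^G,\, E_\lambda(M)\bigr).$$
By the Gassmann-Sunada hypothesis, $(\bbm{1}_{\Gamma_1})\negthickspace\uparrow_{\Gamma_1}^G \cong (\bbm{1}_{\Gamma_2})\negthickspace\uparrow_{\Gamma_2}^G$ as $G$-modules, so the right-hand sides agree for $i=1$ and $i=2$. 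Combined with the identification of the previous paragraph, this gives $\dim E_\lambda(M_1) = \dim E_\lambda(M_2)$ for every $\lambda$, which is the claimed isospectrality.

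The representation-theoretic step is routine once set up, so the main technical point on which the argument hinges is the freeness of the $\Gamma_i$-action: it is needed to guarantee that $M_i$ is a genuine smooth manifold with boundary (rather than an orbifold), so that the classical $L^2$ spectral theory applies and the pullback identification of eigenspaces is valid. If $\Gamma_i$ were permitted fixed points, one would have to work in the orbifold category, which is precisely the generalization employed elsewhere in this paper for the actual construction of $M_1$ and $M_2$.
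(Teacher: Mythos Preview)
Your argument is correct and is essentially the representation-theoretic proof due to B\'erard \cite{Be2} that the paper cites immediately after stating the theorem. Note, however, that the paper itself does not give a proof of Sunada's theorem: it is stated as background with attribution to \cite{Su}, and the paper only remarks that B\'erard's proof (via exactly the Frobenius-reciprocity count you carry out) extends to the orbifold setting when the $\Gamma_i$ are allowed to have fixed points. So there is nothing in the paper to compare against beyond that reference; your write-up matches the approach the authors invoke.
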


For surveys of some of the extensions and applications of Sunada's technique, see \cite{Sunada20YrsLater}, \cite{Brooks}, \cite{BrooksMonthly}, and \cite{BerardPesce}.

P.~B\'{e}rard \cite{Be2} gave a representation-theoretic proof of Sunada's theorem, relaxing the requirement that the subgroups $\Gamma _1$ and $\Gamma _2$ act freely; the conclusion is then that the orbit spaces $\cO_1 = \Gamma _1\backslash M$ and $\cO_2 = \Gamma _2\backslash M$ are isospectral as orbifolds. Recall that an $n$-dimensional {\it orbifold\/} is a space whose local models are orbit spaces of $\R^n$ under action by finite groups $G$; an orbifold with boundary is similarly modeled locally on quotients of a half-space by finite group actions. See \cite{T}, \cite{Sc}, or \cite{ALR} for more details. The {\it singular set\/} of the orbifold consists of all points where the isotropy is nontrivial.

For our purposes, an understanding of one of the simplest examples of an orbifold with boundary will suffice. Consider the rectangle $R =[-1,1]\times [0,1]$ in $\R^2$. The group $\Gamma  = \Z/2\Z$ acts via the reflection $(x,y)\mapsto(-x,y)$ about the vertical axis, and the quotient orbifold $\cO = \Gamma \backslash R$ is, as a point set, the square $[0,1]\times [0,1]$. However, $\cO$ has a singular set consisting of a distinguished ``mirror edge'' $\cM = \{0\} \times [0,1]$, the image of the fixed-point set of $\Gamma $. At points not in $\cM$, the local structure is that of $\R^2$ or of the half-plane, and the projection $R \overset{\pi}{\to} \cO$ is locally a double cover; however, at points of the mirror edge $\cM$, the local structure is that of $\R^2$ modulo a reflection, and the orbit map $\pi $ is a covering map only in the sense of orbifolds.  To distinguish $[0,1]\times [0,1]$ as a point set from $\cO = \Gamma \backslash R$ viewed as an orbifold with boundary, we write $|\cO|$ for the underlying space $[0,1]\times [0,1]$ of $\cO$ when the orbifold structure is disregarded. It is important to note that $\partial\cO$ (the orbifold boundary) differs from $\partial|\cO|$ in that $\partial \cO$ does {\it not} contain the mirror edge $\cM$. In the orbifold sense, the fundamental group of $\cO$ is $\Z/2\Z$, and $R$ is its universal cover. Indeed, the loop in $\cO$ consisting of the straight-line path from $(1/2,1/2)$ to $(0,1/2)$ and back again lifts to a non-closed path, as shown below.
\begin{center}
\includegraphics[width=4.0in]{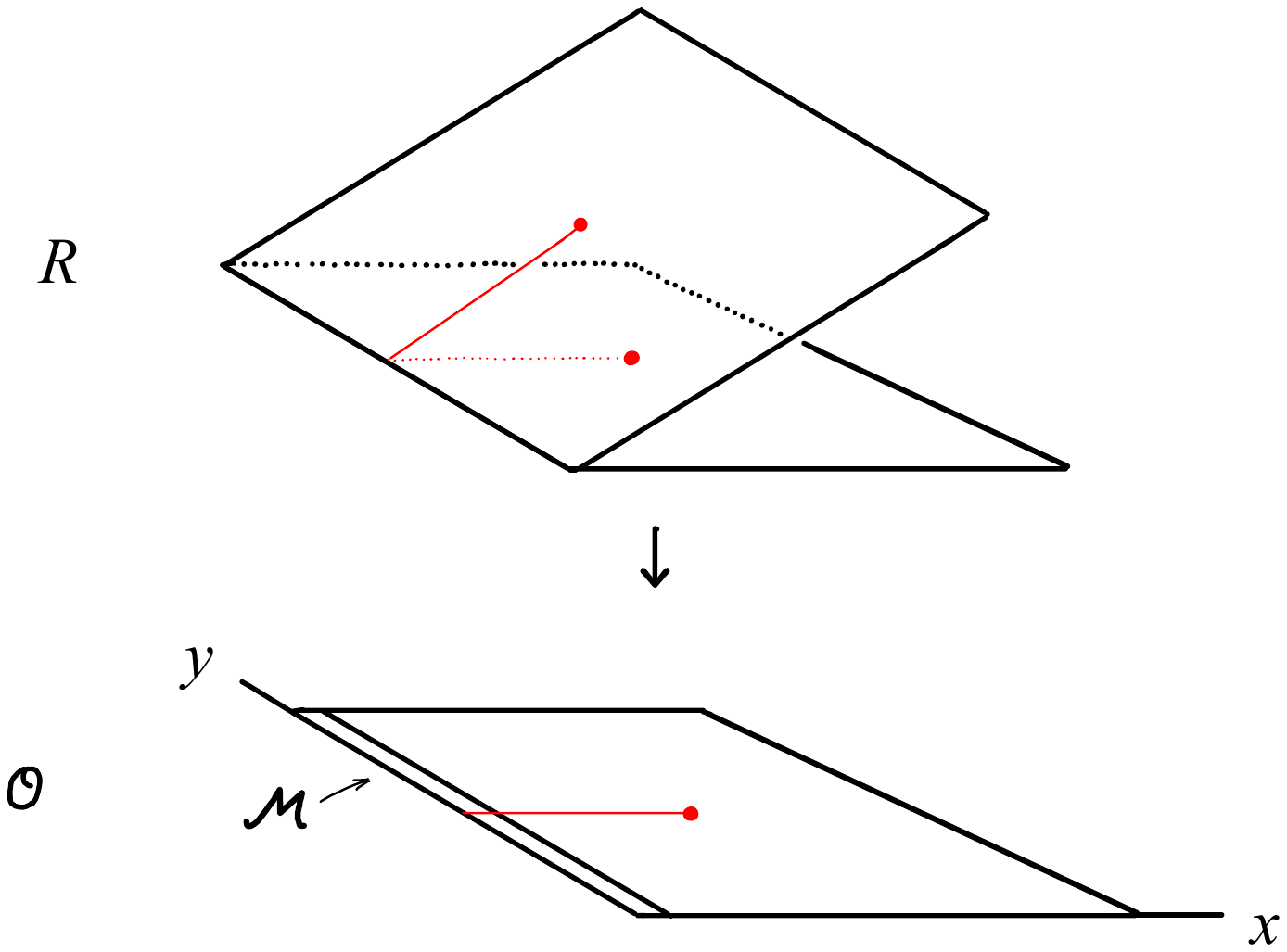}
\end{center}

Mirror edges will be denoted in our drawings by doubled lines.

The smooth functions on $\cO = \Gamma \backslash R$ are precisely the smooth functions on $R$ that are $\Gamma $-invariant, and the {\it spectrum\/} of $\cO$ is the spectrum of the Laplacian acting on $\Gamma $-invariant functions on $R$. Note that a $\Gamma $-invariant smooth function on $R$ restricts to a function on $|\cO|$ with zero normal derivative on the mirror edge $\{0\} \times [0,1]$; thus the Dirichlet spectrum of the orbifold $\cO$ is the spectrum of the domain $|\cO|$ with mixed boundary conditions: Dirichlet conditions on the three edges $[0,1] \times \{0\}$, $[0,1] \times \{1\}$, and $\{1\} \times [0,1]$ forming the orbifold boundary $\partial \cO$, but Neumann conditions on the mirror edge $\cM$.

\begin{rem}\label{RmkOnBoundaryConds}  In general, for a 2-orbifold $\cO$ with boundary whose singular set consists of disjoint mirror arcs, the Dirichlet spectrum of $\cO$ is simply the spectrum of the underlying space $|\cO|$ with mixed boundary conditions: Neumann conditions on the mirror arcs, but  Dirichlet conditions on the remainder of the boundary.  Similarly, the Neumann spectrum of $\cO$ is simply the Neumann spectrum of $|\cO|$, since the reflection-invariance forces Neumann conditions on the part of the boundary of $|\cO|$ corresponding to mirror arcs of $\cO$. This simple observation will be used repeatedly in what follows.
\end{rem}

Given a Gassmann-Sunada triple $(G,\Gamma _1,\Gamma _2)$, in order to apply Sunada's Theorem one needs a manifold $M$ on which $G$ acts by isometries. Perhaps the easiest way to obtain such an $M$ is to begin with a manifold $M_0$ whose fundamental group $\widehat{G} = \pi _1(M_0)$ admits a surjective homomorphism $\varphi:\widehat{G} \to G$. Setting $\widehat{\Gamma} _i = \varphi^{-1} (\Gamma_i)$ for $i=1,2$ defines two subgroups of $\pi_1(M_0)$; by covering space theory, there are covering spaces $M_1$ and $M_2$ corresponding to $\widehat{\Gamma}_1$ and $\widehat{\Gamma}_2$. Also, there is a common regular covering $M$ of $M_1$ and $M_2$ associated to the subgroup $\widehat{K} = \ker \varphi$, so that $M_i = \Gamma_i\backslash M$, $i=1,2$, and $M_0 = G\backslash M$.

P.~Buser \cite{Bu1} exploited the observation that if $\pi _1(M_0)$ is free, then Schreier graphs furnish a concrete means of constructing $M$, $M_1$ and $M_2$ without explicit reference to the universal cover; when $M_0$ is a surface with nonempty boundary, he used this construction to exhibit isospectral flat bordered surfaces.

Given a group $G$, a $G$-set $X$, and a generating set $S$ for $G$, recall that the  {\it Schreier graph\/} of $X$ relative to the generating set $S$ has vertex set $X$, with a directed edge labeled by $s\in S$ joining the vertex $x$ to the vertex $sx$, for each $x\in X$, $s\in S$. Concretely, one chooses the bordered surface $M_0$ to be a thickened one-point union of circles, with one circle for each generator; the case of a three-element generating set $S = \{a,b,c\}$ is depicted below.
\begin{center}
\includegraphics[width=4.5in]{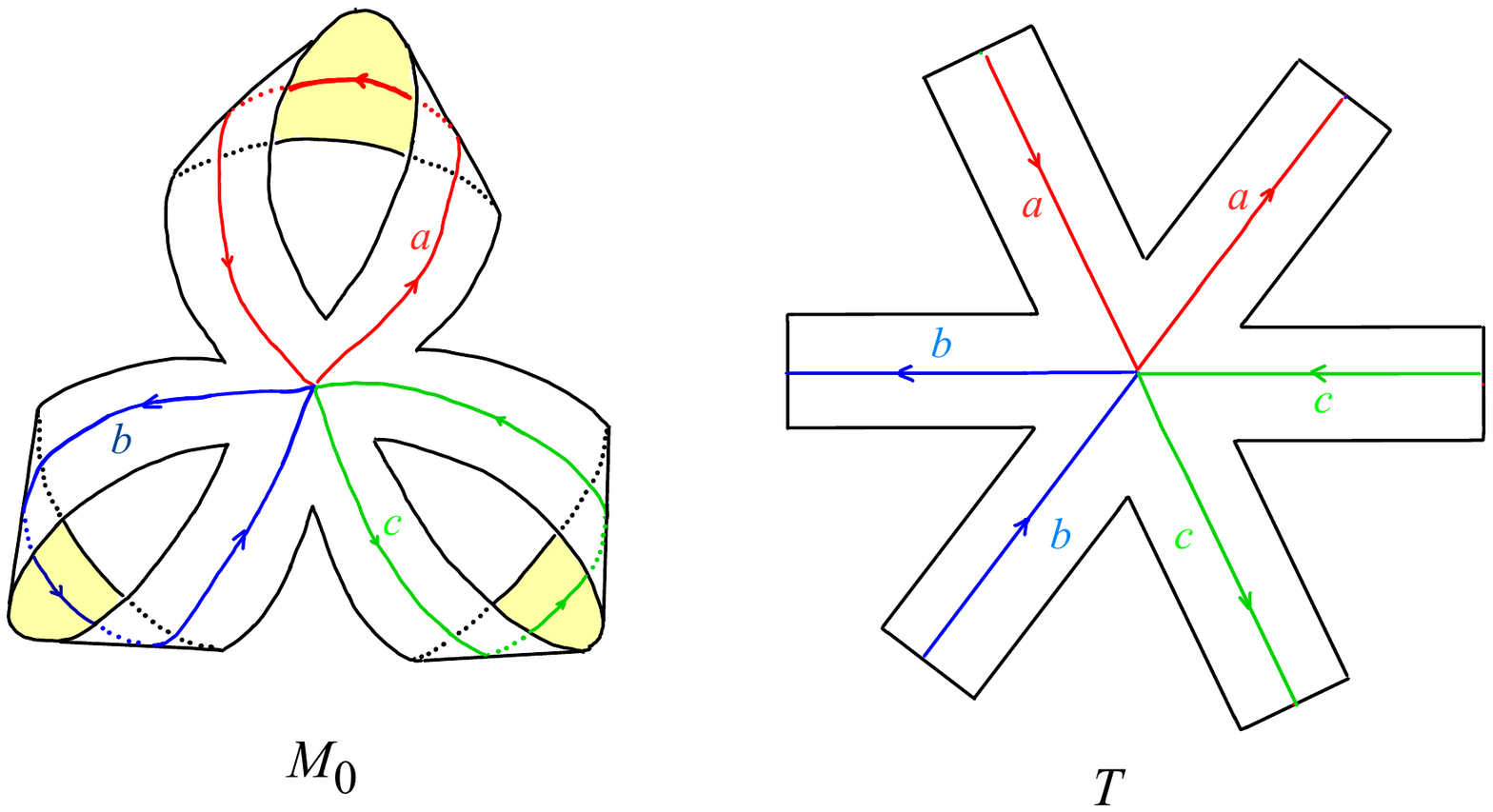}
\end{center}

The construction of $M_i$ for $i=1,2$ now goes as follows. Begin with $[G:\Gamma _i]$ copies of the fundamental domain $T$ depicted above, labeled by the cosets $G/\Gamma_i$. For each generator $s\in S$, glue the edge of the outgoing leg of the tile labeled $x\Gamma _i$ to the edge of the incoming $s$-leg of the tile labeled $sx\Gamma _i$. The surface so constructed is a thickened Schreier graph of $G/\Gamma _i \cong \widehat{G}/\widehat{\Gamma} _i$, so it has fundamental group $\widehat{\Gamma}_i$ and covers $M_0$, and hence must be precisely the manifold $M_i$ defined above.

Suppose now that the group $G$ is generated by a set $S$ of involutions. Then one can construct analogously a pair of isospectral orbifolds with boundary as follows. Let $\cO_0$ be a disk with $[G:\Gamma _i]$ nonintersecting mirror arcs (the case of $S =\{a,b,c\}$ is depicted in Figure~1).
%\begin{figure}
\begin{center}
\includegraphics[width=4.75in]{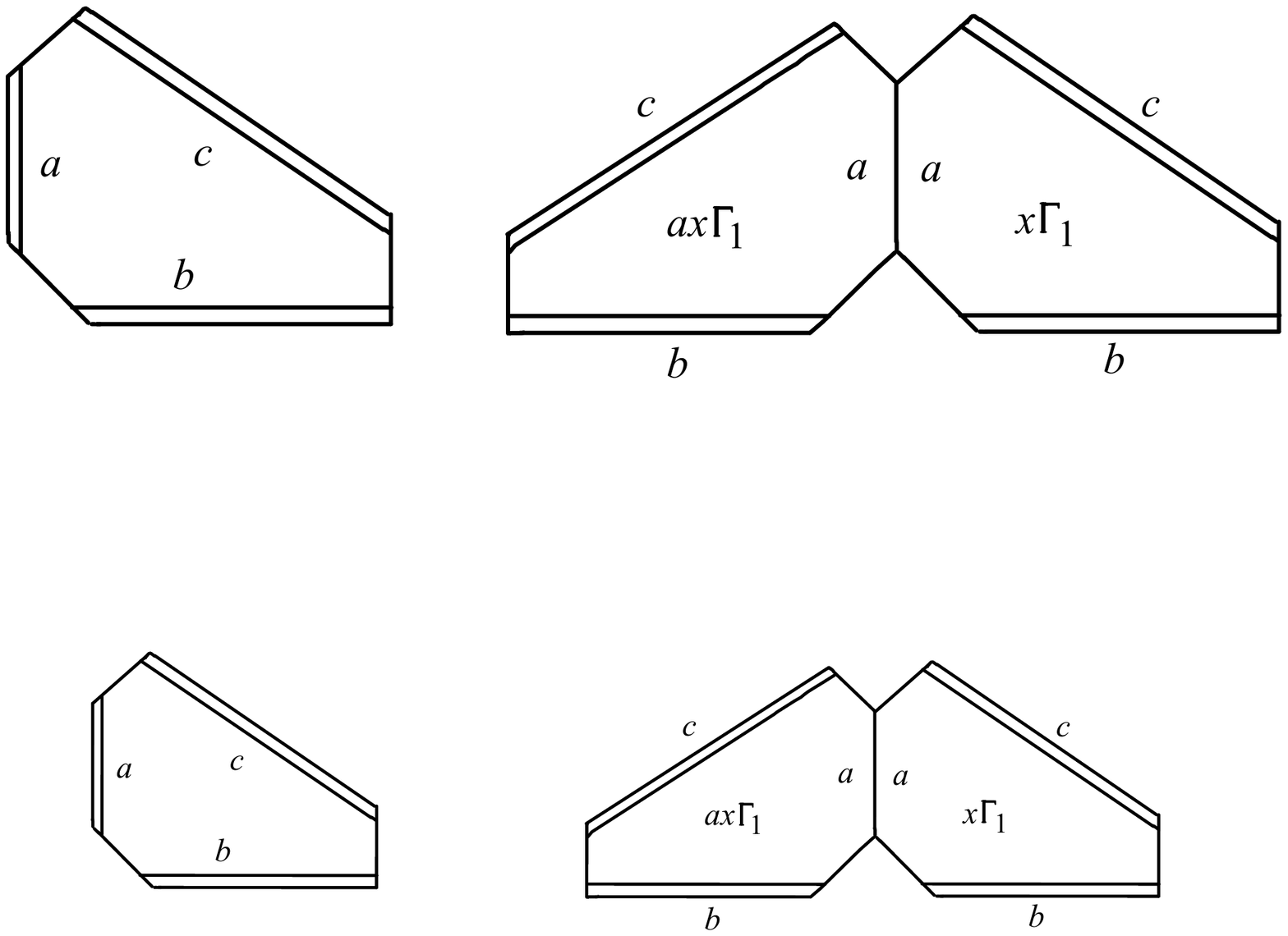}\linebreak
\noindent $\cO_0$\qquad\qquad\qquad\qquad\qquad\qquad\qquad\qquad\qquad\qquad

\vspace{3pt}
{\it Figure 1\hspace{4.5cm} Figure 2}\qquad\qquad\qquad\quad
\end{center}
%\end{figure}

\noindent Then the orbifold fundamental group $\widehat{G} = \pi_1(\cO_0)$ is the free product of $|S|$ copies of $\Z/2\Z$. Define a surjective homomorphism $\varphi:\widehat{G} \to G$ by sending the obvious ``mirror reflection'' generators of $\Z/2\Z*\Z/2\Z* \cdots *\Z/2\Z$ to the elements of $S$. As before, let $\widehat{\Gamma}_i = \varphi^{-1}(\Gamma_i)$, $i=1,2$. Then by covering space theory, there is an orbifold cover $\cO_i$ of $\cO_0$ corresponding to the subgroup $\widehat{\Gamma}_i\subseteq \widehat{G} = \pi _1(\cO_0)$, which can be constructed explicitly as follows: begin with  $[G:\Gamma _i]$ copies of the fundamental domain $|\cO_0|$, labeled by the elements of $G/\Gamma _i$. For each $s\in S$, glue the mirror edge labeled $s$ of the tile labeled $x\Gamma _i$ to the mirror edge labeled $s$ of the tile labeled $sx\Gamma _i$ so that reflection in the common edge interchanges the two tiles, as in Figure~2; if $s$ fixes a coset, then no identification is performed on that edge. Then the orbifold so constructed has fundamental group $\widehat{\Gamma}_i$, so it must be $\cO_i$. By B\'{e}rard's version of Sunada's Theorem, $\cO_1$ and $\cO_2$ are isospectral orbifolds.

\section{Construction}\label{Construction}

In this section, we turn to the proof of the following.

\begin{theorem}\label{t1}
There exists a pair $M_1$, $M_2$ of flat surfaces with boundary which are Neumann isospectral, yet $M_1$ is nonorientable while $M_2$ is orientable.
\end{theorem}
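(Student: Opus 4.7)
The plan is to invoke the orbifold form of the Sunada--B\'{e}rard construction recalled at the end of Section~\ref{IsospectralManifolds}. Specifically, I would fix a finite group $G$ generated by a set $S$ of involutions together with a Gassmann--Sunada triple $(G,\Gamma_1,\Gamma_2)$ in which $\Gamma_1,\Gamma_2$ are almost-conjugate but not conjugate, and I would choose a flat Euclidean tile $\cO_0$---a disk carrying $|S|$ labeled nonintersecting mirror arcs, one per element of $S$, with matching geodesic lengths---so that the prescribed gluings are isometric. The orbifolds $\cO_1$ and $\cO_2$ are then assembled by taking $[G:\Gamma_i]$ copies of $|\cO_0|$ indexed by $G/\Gamma_i$ and gluing the $s$-arc of tile $x\Gamma_i$ to the $s$-arc of tile $sx\Gamma_i$ by reflection whenever $sx\Gamma_i\ne x\Gamma_i$. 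By B\'{e}rard's orbifold Sunada theorem \cite{Be2}, $\cO_1$ and $\cO_2$ are isospectral orbifolds, and Remark~\ref{RmkOnBoundaryConds} upgrades this to Neumann isospectrality of the underlying flat surfaces $M_i := |\cO_i|$ with Neumann conditions on the full boundary.

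The remaining task is to arrange the triple and tile so that $M_1$ is nonorientable while $M_2$ is orientable. Each tile $|\cO_0|$ carries a natural local orientation, and every mirror-arc gluing is a reflection that reverses local orientation. Hence $M_i$ is orientable if and only if its tiles admit a consistent $\pm 1$-coloring with opposite colors on the two sides of every gluing---equivalently, if and only if every closed walk in the Schreier-type tile-adjacency graph of $\cO_i$ has even length. An odd-length closed walk of tiles in $M_i$ would produce orientation-reversing monodromy around the loop and thus exhibit a M\"obius-type obstruction to orientability. My plan therefore reduces to exhibiting an explicit triple and tile for which one tile-adjacency graph admits an odd cycle while the other does not.

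The hard part will be precisely this last point, because bipartiteness is a coarse invariant of the Gassmann--Sunada data: if $G$ admits a homomorphism $\bar\psi\colon G\to \Z/2\Z$ sending every $s\in S$ to $1$, then setting $N=\ker\bar\psi$, the almost-conjugacy bijection between $\Gamma_1$ and $\Gamma_2$ restricts to a bijection $\Gamma_1\cap N\to \Gamma_2\cap N$ (since $N$ is normal), so $\Gamma_1\subseteq N$ iff $\Gamma_2\subseteq N$, and the two tile-adjacency graphs are simultaneously bipartite or simultaneously not. Breaking this symmetry demands a more refined construction---for instance, a tile in which several mirror arcs share a common generator label so that the tile-adjacency multigraph encodes combinatorial data beyond conjugacy-class multiplicities, or a carefully engineered triple for which odd closed walks arise in one Schreier graph but not the other. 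Once such an explicit example is exhibited, Neumann isospectrality follows automatically from the Sunada--B\'{e}rard machinery, and the orientability discrepancy is confirmed by tracing a single explicit loop through the tiles of each cover and checking its parity.
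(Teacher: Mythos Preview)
Your framework is exactly right and matches the paper's: build $\cO_1,\cO_2$ from a flat mirrored tile via the Sunada--B\'erard orbifold recipe, invoke Remark~\ref{RmkOnBoundaryConds} for Neumann isospectrality of $M_i=|\cO_i|$, and detect orientability as bipartiteness of the tile-adjacency graph. The gap is in your ``hard part'': the obstruction you identify is illusory. Your argument shows only that $\Gamma_1\subseteq N\Longleftrightarrow\Gamma_2\subseteq N$ for $N=\ker\bar\psi$, but bipartiteness of the tile-adjacency graph is \emph{not} equivalent to $\Gamma_i\subseteq N$. That equivalence would hold if every generator moved every coset; but when some $s\in S$ fixes a coset $x\Gamma_i$, that ``edge'' becomes a mirror arc (a half-edge) and is dropped from the adjacency graph, and a bipartite $2$-coloring of the remaining graph need not descend from any homomorphism $G\to\Z/2\Z$. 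Indeed $\Gamma_i\subseteq N$ would force $\Gamma_i$ to contain no conjugate of any $s\in S$ (since $\bar\psi(s)=1$), hence no half-edges at all---so your criterion is strictly stronger than orientability of $M_i$ and is not what governs the parity of cycles once half-edges are present.

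The paper therefore does exactly what you dismiss as ``carefully engineered'' but is in fact routine: it writes down Gerst's triple $G=\Z_8\rtimes\Z_8^\times$ with involutive generators $\sigma=st,\,t,\,u$ and $\Gamma_1=\{1,t,u,tu\}$, $\Gamma_2=\{1,t,s^4u,s^4tu\}$, and draws the two Schreier graphs. Here the unique $\bar\psi$ sending $\sigma,t,u\mapsto 1$ has $\bar\psi(s)=0$, and \emph{neither} $\Gamma_i$ lies in $N$ since both contain $t$; nevertheless the Schreier graph of $G/\Gamma_2$ (half-edges removed) is bipartite with parts $\{0,5,6,7\}$ and $\{1,2,3,4\}$, while that of $G/\Gamma_1$ contains the odd $5$-cycle $1\xrightarrow{u}3\xrightarrow{\sigma}6\xrightarrow{t}2\xrightarrow{\sigma}7\xrightarrow{t}1$, which the paper exhibits as an embedded M\"obius band. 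The asymmetric placement of the half-edges (for $\Gamma_1$ they cluster on cosets $0$ and $4$; for $\Gamma_2$ they are spread over $0,2,4,6$) is precisely what breaks the parity symmetry, and it is invisible to your $\bar\psi$-argument. No exotic tile with repeated labels is needed.
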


\begin{proof}
The surfaces $M_1$ and $M_2$ will be constructed as the underlying spaces of Neumann isospectral orbifolds $\cO_1$ and $\cO_2$ with boundary whose singular sets consist of disjoint unions of mirror arcs.  As noted above in Remark~\ref{RmkOnBoundaryConds}, this means that Neumann boundary conditions hold on the mirror edges of the underlying surfaces $M_1 = |\cO_1|$ and $M_2 = |\cO_2|$ as well as on the edges forming the orbifold boundary; since $\partial M_i$ consists of the boundary of $\cO_i$ together with the mirror edges, it follows that Neumann conditions hold on the entire boundary of $M_i$.

The Gassmann-Sunada triple we use was first considered by Gerst \cite{Ge}; it was also used by Buser \cite{Bu2} to construct isospectral Riemann surfaces, and in \cite{GWW}. Let $G$ be the semidirect product of a multiplicatively-written cyclic group $\langle s\rangle$ of order 8 by its full automorphism group; the latter is a Klein 4-group, generated by the automorphism $t$ sending $s\mapsto s^7$ and the automorphism $u$ sending $s\mapsto s^3$. Thus $G$ is the semidirect product $\Z_8 \rtimes \Z^\times_8$, with $s$ generating the cyclic subgroup $\Z_8$; a presentation is $G = \big\langle s,t,u\mid s^8 = t^2 = u^2 = [t,u] = 1,~ tst=s^7,~usu=s^3\big\rangle$. Let $\Gamma_1 = \{1,t,u,tu\}$, $\Gamma_2 = \{1,t,s^4u,s^4tu\}$. Then $(G,\Gamma_1,\Gamma_2)$ is a Gassmann-Sunada triple (with $\Gamma_1$ and $\Gamma_2$ nonconjugate).

Now let $\cO_0$ be the orbifold with boundary depicted below, in Figure 3:
\begin{center}
\includegraphics[width=1.75in]{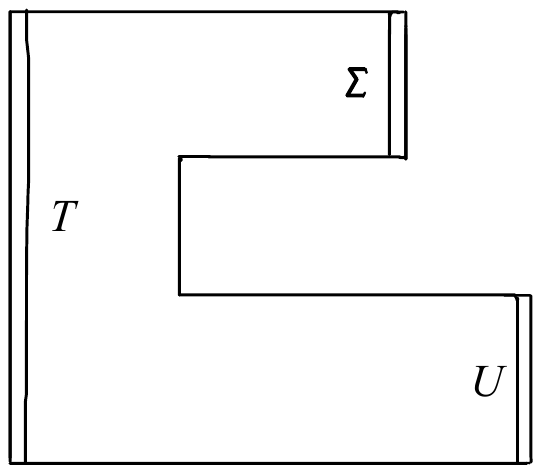}\linebreak
{\it Figure 3}
\end{center}

Its fundamental group $\widehat{G}$ is then given by $\widehat{G} = \pi_1(\cO_0) = \big\langle \Sigma,T,U\mid \Sigma^2 = T^2 = U^2 = 1\big\rangle\cong \Z/2\Z* \Z/2\Z*\Z/2\Z$. Let $\sigma  = st \in G$. Then $\sigma$, $t$ and $u$ are involutions, so we can define $\varphi: \pi_1(\cO_0)\to G$ by $\Sigma\mapsto \sigma$, $T\mapsto t$, $U\mapsto u$. Since $G$ is generated by the set $\{\sigma,t,u\}$, it follows that $\varphi$ is surjective.

As in section \ref{IsospectralManifolds}, let $\widehat{\Gamma}_i = \varphi^{-1}(\Gamma_i)$. The elements $1,s,s^2,\dotsc, s^7$ form a set of coset representatives for $G/\Gamma _i$ and hence for $\widehat{G}/\widehat{\Gamma}_i$ for $i=1,2$; we will denote the coset $s^i\Gamma _1$ or $s^i\Gamma _2$ simply by $i$ in the depiction of the Schreier graphs below. The action on $G/\Gamma_1$  and $G/\Gamma _2$ of the three generators $\sigma $, $t$ and $u$ is most easily recorded by the Schreier graphs below; when dealing with generating sets consisting of involutions, we adopt the convention that a single undirected edge labeled by a generator $r\in S$ replaces the two oppositely-directed edges labeled $r$ and $r^{-1}$ joining a pair of vertices; if an edge labeled $r$ leaves a vertex labeled $x$ and does not terminate at another vertex, this indicates that the generator $r$ fixes the coset $x$.  Thus when dealing with involutive generators, we are replacing a pair of oppositely-directed edges by a single undirected edge, and replacing a loop based at a vertex by a ``half-edge'' emanating from that vertex.

\begin{equation}\label{SchreierGraphs}
\xymatrix@R=-3pt@C=13pt{
&&&&&&&&{}&&{}&&&&&&&&&&{}\\
{2}&&{7}&&{1}&&{0}&&&&&&{2}&&{7}&&{1}&&{0}\\
{\circ}\ar^{\sigma}@{-}[rr]&&{\circ}\ar^{t}@{-}[rr]&&{\circ}\ar^{\sigma}@{-}[rr]&&{\circ}\ar^{t}@{-}[uurr]&&&&&&{\circ}\ar_u@{-}[uull]\ar^{\sigma}@{-}[rr]&&{\circ}\ar^t@{-}@/^/[rr]\ar_u@{-}@/_/[rr]&&{\circ}\ar^{\sigma}@{-}[rr]&&{\circ}\ar^t@{-}[uurr]\\ \\ \\ \\
&&&&&&&&{}\ar^u@{-}[uuuull]&&{}&&&&&&&&&&{}\\ \\ \\
&&&&&&&&&&&&&&&&&& \\ \\ \\
&&&&&&&&{}&&{}&&&&&&&&&&{}\\ \\ \\ \\
{\circ}\ar^t@{-}@/^/[uuuuuuuuuuuuuu]\ar_u@{-}@/_/[uuuuuuuuuuuuuu]\ar_{\sigma}@{-}[rr]&&{\circ}\ar_t@{-}[rr]\ar_>>>{u}@{-}[rruuuuuuuuuuuuuu]
&&{\circ}\ar_{\sigma}@{-}[rr]
\ar@{-}'[luuuuuuu]_<<<u'[lluuuuuuuuuuuuuu]
&&{\circ}\ar^u@{-}[uuuurr]&&&&&&{\circ}\ar^t@{-}[uuuuuuuuuuuuuu]\ar_{\sigma}@{-}[rr]&&{\circ}\ar^u@{-}@/^/[rr]\ar_t@{-}@/_/[rr]&&{\circ}\ar_{\sigma}@{-}[rr]&&{\circ}\ar_u@{-}[uuuuuuuuuuuuuu]\\
{6}&&{3}&&{5}&&{4}&&&&&&{6}&&{3}&&{5}&&{4}\\
\\
&&&&&&&&{}\ar^t@{-}[uuull]&&{}\ar_u@{-}[uuurr]&&&&&&&&&&{}\ar^t@{-}[uuull]
}
\end{equation}
\hspace{1.4in}$G/\Gamma_1$\hspace{2.65in}$G/\Gamma_2$

\vspace{5pt}
Let $\cO_i$ be the orbifold covering of $\cO_0$ corresponding to the subgroup $\widehat{\Gamma}_i$ of $\widehat{G}$. The bordered surfaces $M_1 = |\cO_1|$ and $M_2 = |\cO_2|$ are shown below: the first model of $M_1$ is embedded in $\R^3$, while the second model shows it immersed with an arc of self-intersection; the immersed version enables one to see easily an involutive symmetry that will be exploited in section \ref{NeumannNotDirichlet}.
\begin{center}
\includegraphics[width=5.5in]{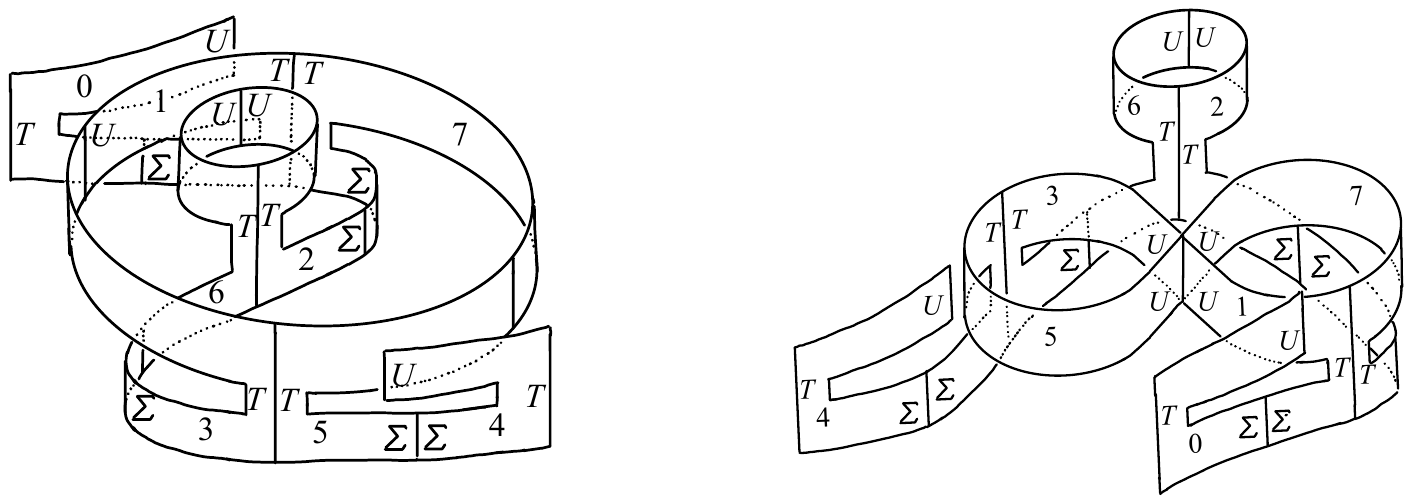}\linebreak
$M_1\text{ embedded}\hspace{2.0in}M_1\text{ immersed}$\linebreak
\includegraphics[width=3.5in]{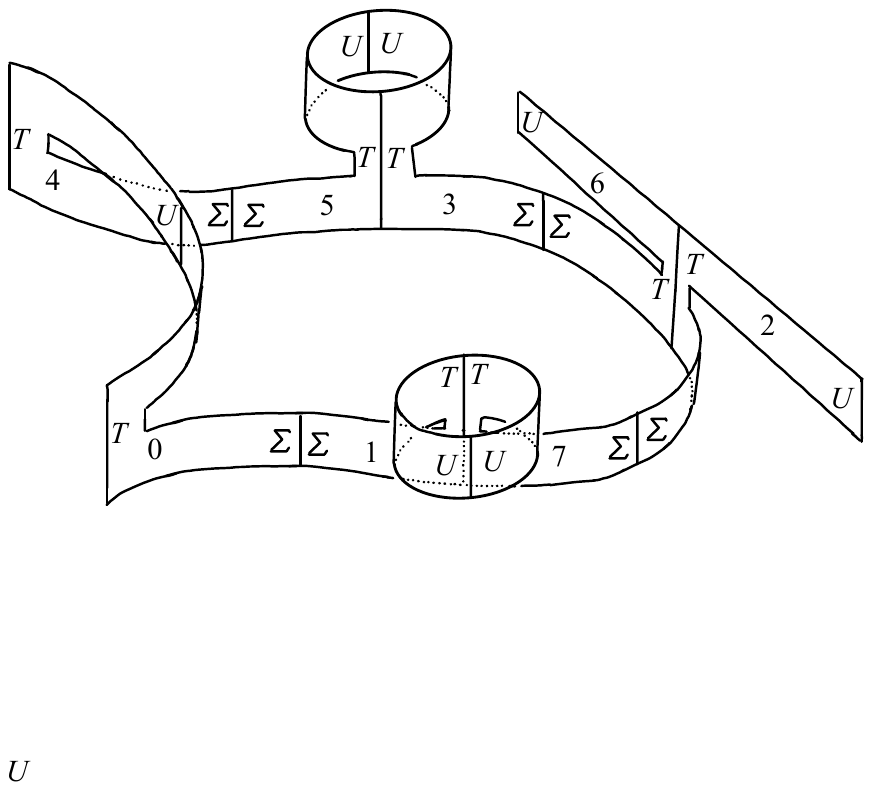}\linebreak
$M_2$\linebreak
{\it Figure 4}
\end{center}

Clearly, $M_1$ is nonorientable while $M_2$ is orientable. Indeed, an embedded M\"obius strip in $M_1$ can be seen in the following picture:
\begin{center}
\includegraphics[width=4.25in]{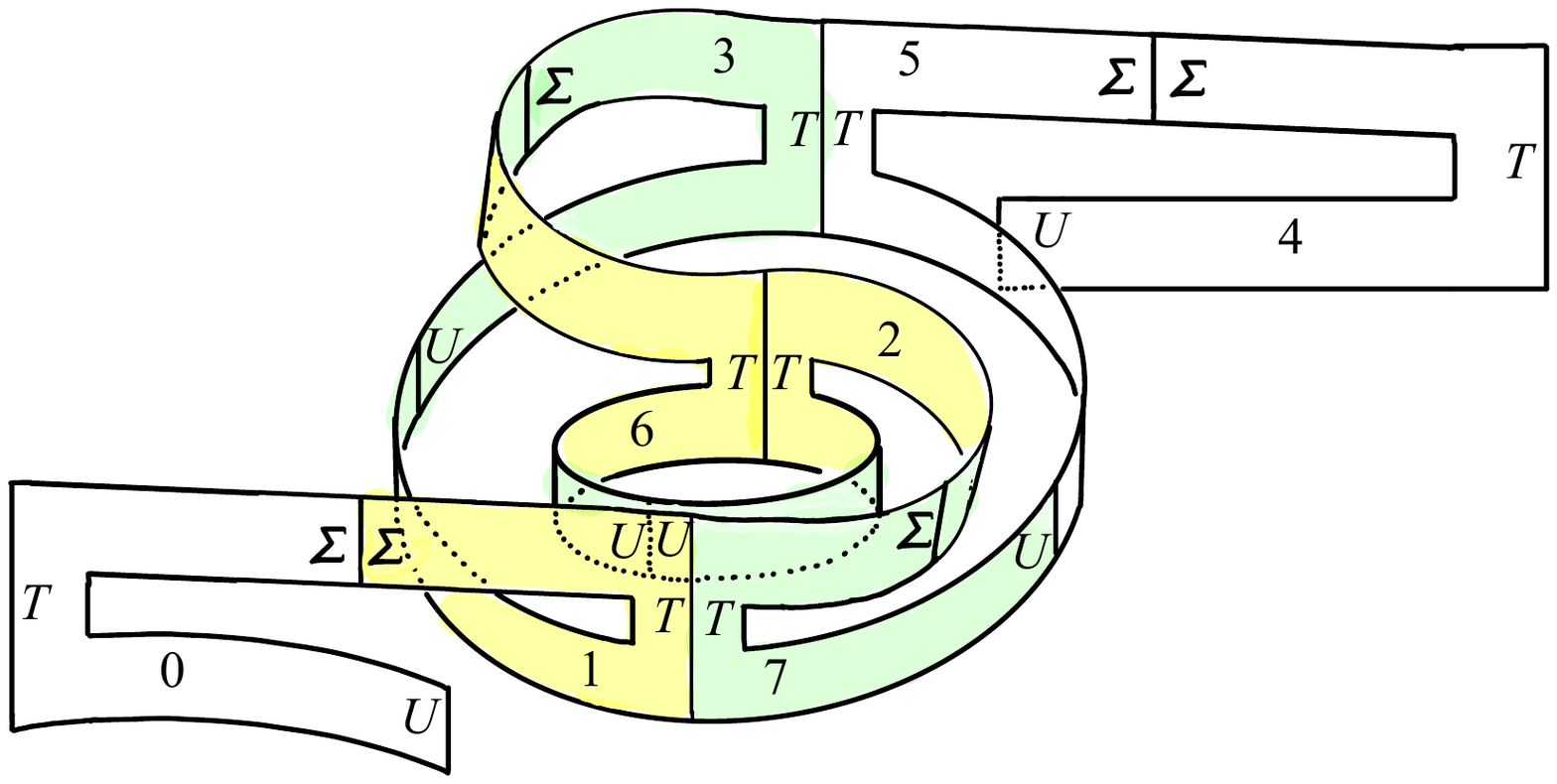}
\end{center}
If one colors one side of the fundamental tile yellow and the other side green, then beginning with the yellow side of tile 1, the succession of the $U$-gluing of tile 1 to tile 3, the $\Sigma$-gluing of tile 3 to tile 6, the $T$-gluing of tile 6 to tile 2, and the $\Sigma$-gluing of tile 2 to tile 7 shows that the $T$-gluing of tile 7 back to tile 1 forces the gluing of the green side of tile 7 to the yellow side of tile 1, exhibiting the union of tiles 1, 3, 6, 2, and 7 as a one-sided surface.  Thus one cannot ``hear'' orientability of surfaces with boundary.
\end{proof}

An immediate consequence of the above result is the following.
\begin{theorem}\label{CStructureInaudible}
One cannot hear whether a surface with boundary admits a complex structure.
\end{theorem}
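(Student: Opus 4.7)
The strategy is immediate from Theorem~\ref{t1}: I simply combine its conclusion with the basic fact that complex manifolds are canonically orientable, so the existence of a complex structure on a surface is equivalent to orientability (for oriented surfaces with or without boundary, the conformal class of any Riemannian metric yields a complex structure).

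More precisely, I would take the pair $M_1$, $M_2$ produced in Theorem~\ref{t1}. Since $M_2$ is an orientable smooth surface with boundary, the flat Riemannian metric on $M_2$ determines a conformal structure, and on an oriented surface a conformal structure is the same data as a complex structure (via isothermal coordinates, which exist near every interior point and extend up to the boundary, making $M_2$ into a Riemann surface with boundary). Hence $M_2$ admits a complex structure. On the other hand, $M_1$ is non-orientable, and any almost complex structure on a smooth surface induces an orientation by declaring $(v, Jv)$ to be positively oriented; consequently no non-orientable surface (with or without boundary) admits a complex structure, so $M_1$ does not.

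Since Theorem~\ref{t1} asserts that $M_1$ and $M_2$ have the same Neumann spectrum, the property ``admits a complex structure'' is not determined by the Neumann spectrum, which is precisely the claim of Theorem~\ref{CStructureInaudible}.

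There is essentially no obstacle: the only point worth verifying carefully is that the standard equivalence between orientability and the existence of a complex structure goes through for surfaces with boundary, but this follows from the existence of isothermal coordinates for any smooth Riemannian metric on an oriented surface (here even trivially, since the metric is flat) together with the doubling trick to handle boundary points if desired. Everything else is a direct invocation of the previously-proved theorem.
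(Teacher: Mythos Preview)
Your argument is correct, and the core logic---orientable $\Rightarrow$ complex structure exists, nonorientable $\Rightarrow$ it does not---is exactly what the paper uses. The route differs in one respect: rather than reusing the flat surfaces of Theorem~\ref{t1}, the paper reruns the construction with a right-angled hyperbolic hexagon as fundamental tile, obtaining hyperbolic surfaces homeomorphic to the original $M_1$, $M_2$, and then invokes the uniformization theorem to equip the orientable one with a Riemann surface structure. Your approach is more economical: you stay with the flat examples already in hand and observe that a flat (or indeed any Riemannian) metric on an oriented surface yields a complex structure via isothermal coordinates, which for a flat metric are just Euclidean coordinates. This avoids both the second construction and the appeal to uniformization. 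The paper's detour buys nothing essential for the theorem as stated, though it does set up the hyperbolic examples used in the subsequent remarks about closed geodesic boundary and $Y$-pieces.
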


\begin{proof}
Perform the same construction as above, but using a different fundamental tile: rather than the tile in Figure~3, use a right-angled hyperbolic hexagon, three pairwise nonadjacent sides of which are mirror loci, as in Figure~5:
\begin{center}
\includegraphics[width=2.0in]{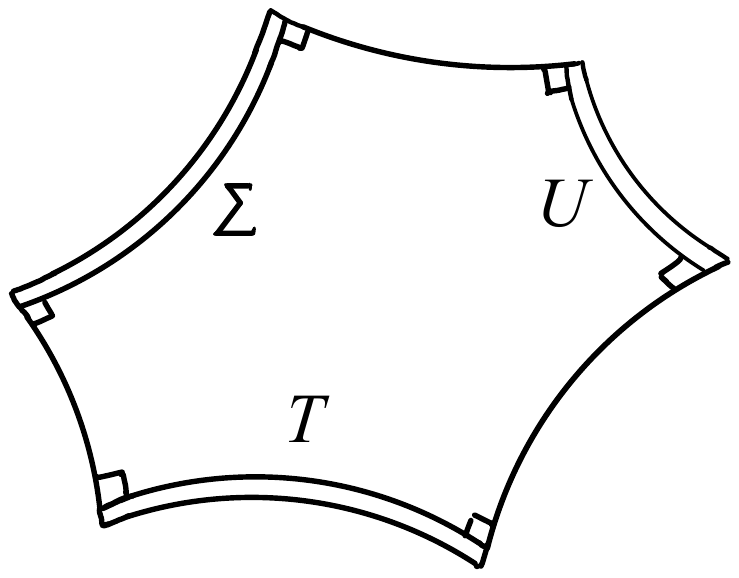}\linebreak
{\it Figure 5}
\end{center}
The resulting manifolds $M_1$ and $M_2$ are homeomorphic to the manifolds of Figure 4 above.  Both are hyperbolic surfaces with piecewise-smooth boundary.  An orientable hyperbolic surface has a Riemann surface structure, by the uniformization theorem (see \cite{FaMa}, \S 11.1.1 or \cite{Hubbard}, Chapter 1).  Thus $M_2$ has a complex structure.  However, $M_1$, being nonorientable, cannot have a complex structure, since any complex manifold is canonically orientable.
\end{proof}

\begin{remarks} We list a few immediate remarks:
\begin{enumerate}

\item The nonorientability of $M_1$ can be seen algebraically from the presence of cycles of odd length in the Schreier graph of $G/\Gamma_1$ in \eqref{SchreierGraphs}.

\item A slight modification of the construction in Theorem \ref{CStructureInaudible} yields a pair of isospectral compact hyperbolic surfaces with smooth boundary whose boundary components are closed geodesics; the first of these surfaces is nonorientable but the second is orientable.  For $i=1,2$, perform the same construction, but using as basic tile the pair of pants (in the terminology of \cite{BuserBook}, section 3.1, a \emph{$Y$-piece}) obtained by gluing together two copies of the right-angled hyperbolic hexagon along the alternating non-mirror edges of the tile of Figure~5.  Since each $Y$-piece is constructed by ``doubling'' a right-angled hyperbolic hexagon by gluing along the mirror edges, this amounts to beginning with two copies of the manifold $M_i$ of Theorem \ref{CStructureInaudible} and gluing the two copies together along the non-mirror edges of their boundaries, thereby effectively doubling the manifold $M_i$ of Theorem \ref{CStructureInaudible} to obtain a hyperbolic surface $N_i$ with geodesic boundary components (the boundary components correspond to half-edges in the Schreier graphs in \eqref{SchreierGraphs}).  For more information on pants decompositions of nonorientable hyperbolic surfaces, see \cite{PP}.

\item Peter Doyle and Juan Pablo Rossetti have shown \cite{DoyleRossetti} that if two \emph{closed} hyperbolic surfaces have the same spectrum, then for every possible length, the two surfaces have the same number of orientation-preserving geodesics and the same number of orientation-reversing geodesics.  Thus one \emph{can} hear orientability of closed hyperbolic surfaces.

\item The reader is encouraged to build paper and tape models; this will make it easy to follow the arguments in sections \ref{Transplantation} and \ref{NeumannNotDirichlet}.
\end{enumerate}
\end{remarks}

\section{Transplantation of eigenfunctions}\label{Transplantation}

In this section, we present an elementary visual proof of the Neumann isospectrality using the idea of ``transplantation'' of eigenfunctions from B\'{e}rard's proof of Sunada's Theorem.  We give an explicit combinatorial recipe for transplanting a Neumann eigenfunction on $M_1$ to a Neumann eigenfunction for the same eigenvalue on $M_2$. To do this we first determine the irreducible representations of $G$, then express the induced representations $\C[G/\Gamma _i] = (\bbm{1}_{\Gamma _i})\negthickspace \uparrow^G_{\Gamma _i}$ in terms of the irreducible representations. The reader who is only interested in the isospectrality proof can skip to Remarks~\ref{MapsOfSubreps}.

Recall that $G$ is a semidirect product $K\rtimes H$, where the normal subgroup $K = \langle s\rangle$ is cyclic of order 8, and $H = \langle t,u\mid t^2=u^2 = [t,u] = 1\big\rangle$ is a Klein 4-group, with $t$ acting as the automorphism $s\mapsto s^7$ and $u$ acting as the automorphism $s\mapsto s^3$. The rational group algebra $\Q[K\rtimes H]$ of a semidirect product has the structure of a trivial crossed-product or {\it twisted group algebra\/} $(\Q K)\sharp H$ (see \cite{CR} for general information on crossed products). The group algebra $\Q C_n$ of a cyclic group $C_n$ of order $n$ decomposes as a product of cyclotomic fields: $\Q C_n \cong \prod_{d|n} \Q[\zeta _d]$, where $\zeta _d$ is a primitive $d$th root of unity (see \cite{L}), so $\Q K \cong \Q \times \Q \times \Q [i] \times \Q [\zeta]$, where $\zeta $ is a primitive eighth root of unity. The $H$-action on $\Q K$ stabilizes this decomposition, so $\Q G \cong (\Q\times \Q \times \Q [i] \times \Q [\zeta ]) \sharp H \cong \Q H\times \Q H\times (\Q[i]\sharp H) \times (\Q [\zeta ]\sharp H)$. Tensoring with $\C$, we obtain
\begin{equation}\label{3.1}
\C G \cong \C H \times \C H \times (\C \otimes_{\Q}\Q[i]) \sharp H\times (\C \otimes_{\Q} \Q [\zeta ]) \sharp H.
\end{equation}

The first two factors of \eqref{3.1} yield eight one-dimensional representations, denoted $\bbm{1}^{abc}$, where each of $a,b,c$ can be +1 or $-1$; each ${\bbm{1}}^{abc}$ is a one-dimensional vector space, where $s$ acts as the scalar $a$, $t$ acts as $b$, and $u$ acts as $c$. We will denote the trivial one-dimensional representation $\bbm{1}^{+++}$ simply as $\bbm{1}$, and the ``parity'' representation $\bbm{1}^{-++}$ simply as $\bbm{1}^-$. In the third factor of \eqref{3.1}, $H$ acts on $\C \otimes \Q[i]\cong  \C \times \C$ as follows: $t$ and $u$ both act by the involution $(x,y) \mapsto (y,x)$, so $v = tu$ acts trivially; thus $(\C \otimes \Q[i]) \sharp H \cong (\C \otimes \Q[i]) \langle v\rangle \sharp_c \overline H$, a crossed product algebra in which the quotient group $\overline H = H/\langle v\rangle$ acts faithfully on the ordinary group ring $(\C \otimes \Q[i])\langle v\rangle$ of the cyclic group $\langle v\rangle$ over the coefficient ring $\C \otimes \Q[i]$, and $c$ is a 2-cocycle defining the extension $1\to\langle v\rangle \to H \to H/\langle v\rangle \to 1$; but this extension is split, so the cocycle $c$ can be taken to be trivial, and hence $(\C \otimes \Q [i]) \sharp H$ is an ordinary twisted group ring $(\C \otimes \Q [i]) \langle v\rangle \sharp \overline H$.  Moreover, $\overline H$ acts trivially on $\langle v\rangle$, as the extension is central, so the above reduces to $\C\langle v\rangle \otimes_{\Q} (\Q [i]\sharp \overline H)$. Now $\overline H$ acts faithfully on $\Q[i]$, so by Galois theory (e.g., \cite{DI}, Chapter III, Proposition 1.2), $\Q[i]\, \sharp\, \overline H\cong \End_{\Q} (\Q [i]) \cong \M_2(\Q)$, while $\C \langle v\rangle \cong \C \times \C$; thus the third factor of \eqref{3.1} decomposes as $\M_2(\C) \times \M_2(\C)$ and contributes two irreducible representations $W^+$ and $W^-$. Tracing through the isomorphisms, one sees that the actions of the generators of $G$ in these representations are as follows:
\begin{itemize}
\item On $W^+$, $s$ acts as $\bmat{
i &0\\
0 &-i
}$, while $t$ and $u$ act as
$\left[
\begin{array}{cc}
0 &1\\
1 &0
\end{array}
\right]$;
\item On $W^-$, $s$ acts as
$\left[
\begin{array}{cc}
i &0\\
0 &-i
\end{array}
\right]$, $t$ acts as
$\left[
\begin{array}{cc}
0 &1\\
1 &0
\end{array}
\right]$, and $u$ acts as
$\left[
\begin{array}{cc}
0 &-1\\
-1 &0
\end{array}
\right]$.
\end{itemize}

Finally, consider the fourth factor $(\C \otimes_{\Q} \Q [\zeta]) \sharp H \cong \C \otimes_{\Q} (\Q [\zeta ] \sharp H)$ in \eqref{3.1}. By Galois theory, $\Q[\zeta ] \sharp H \cong \End_{\Q}(\Q[\zeta ]) \cong \M_4(\Q)$, since $H = \Gal (\Q[\zeta]/\Q)$. Thus the fourth factor of the decomposition \eqref{3.1} is $\M_4(\C)$, and it contributes a 4-dimensional irreducible representation $X$. Using the basis $\{1,\zeta ,\zeta ^2,\zeta ^3\}$ for $\Q[\zeta ]$, one sees that:
\[ \text{$s$ acts by
$\left[
\begin{array}{cccc}
0 &0&0&-1\\
1 &0 &0&0\\
0 &1&0&0\\
0 &0&1&0
\end{array}
\right]$,\quad $t$ by $\left[
\begin{array}{cccc}
1 &0&0&0\\
0 &0 &0&-1\\
0 &0&-1&0\\
0 &-1&0&0
\end{array}
\right]$, \quad and $u$ by
$\left[
\begin{array}{cccc}
1 &0&0&0\\
0 &0 &0&1\\
0 &0&-1&0\\
0 &1&0&0
\end{array}
\right]$}.\]
This completes the determination of all the irreducible representations of $G$; the character table is given in Table 1.  As usual, the rows are indexed by the irreducible representations (up to isomorphism) of $G$ and the columns are indexed by (representatives of) the conjugacy classes of $G$.  The additional row at the bottom of the table records the character values of the induced representations $(\bbm{1}_{\Gamma _i})\negthickspace\uparrow^G_{\Gamma _i}$, which are easily computed via the formula for the character of an induced representation, or directly from the Schreier graphs \eqref{SchreierGraphs}.

\begin{center}
\begin{tabular}{c||c|c|c|c|c|c|c|c|c|c|c||}
{} & $1$ & $s^4$ & $s^2$ & $v$ & $s^2v$ & $s$ & $t$ & $u$ & $st$ & $su$ & $sv$ \\ \hline\hline
$\bbm{1}$ & $1$ & $1$ & $1$ & $1$ & $1$ & $1$ & $1$ & $1$ & $1$ & $1$ & $1$ \\ \hline
$\bbm1^{+-+}$ & 1 & 1 & 1 & $-1$ & $-1$ & 1 & $-1$ & 1 & $-1$ & 1 & $-1$ \\ \hline
$\bbm1^{++-}$ & 1 & 1 & 1 & $-1$ & $-1$ & 1 & 1 & $-1$ & 1 & $-1$ & $-1$ \\ \hline
$\bbm1^{+--}$ & 1 & 1 & 1 & 1 & 1 & 1 & $-1$ & $-1$ & $-1$ & $-1$ & 1\\ \hline
$\bbm1^-$ & 1 & 1 & 1 & 1 & 1 & $-1$ & 1 & 1 & $-1$ & $-1$ & $-1$ \\ \hline
$\bbm1^{--+}$ & 1 & 1 & 1 & $-1$ & $-1$ & $-1$ & $-1$ & 1 & 1 & $-1$ & 1 \\ \hline
$\bbm1^{-+-}$ & 1 & 1 & 1 & $-1$ & $-1$ & $-1$ & 1 & $-1$ & $-1$ & 1 & 1 \\ \hline
$\bbm1^{---}$ & 1 & 1 & 1 & 1 & 1 & $-1$ & $-1$ & $-1$ & 1 & 1 & $-1$ \\ \hline
$W^+$ & 2 & 2 & $-2$ & 2 & $-2$ & 0 & 0 & 0 & 0 & 0 & 0 \\ \hline
$W^-$ & 2 & 2 & $-2$ & $-2$ & $2$ & 0 & 0 & 0 & 0 & 0 & 0 \\ \hline
$X$ & 4 & $-4$ & 0 & 0 & 0 & 0 & 0 & 0 & 0 & 0 & 0 \\ \hline\hline
 $(\bbm{1}_{\Gamma _i})\negthickspace\uparrow^G_{\Gamma _i}$ & 8 & 0 & 0 & 4 & 0 & 0 & 2 & 2 & 0 & 0 & 0 \\ \hline\hline
\end{tabular}

\vspace{3pt}
{\it Table 1}
\end{center}

It is easy to compute the character of the induced representation $(\bbm{1}_{\Gamma _i})\negthickspace\uparrow^G_{\Gamma _i}$ in terms of the irreducible characters by orthonormal expansion, since the latter form an orthonormal basis for the space of class functions relative to the inner product $\langle\cdot,\cdot\rangle$ given by
\[\langle\chi,\psi
  \rangle=\frac1{|G|} \sum_{g\in G} \chi (g) \overline \psi (g).\]
One finds:

\begin{prop}\label{p3.2}
$\C[G/\Gamma _i] \cong \bbm{1} \oplus \bbm{1}^- \oplus W^+ \oplus X$.
\end{prop}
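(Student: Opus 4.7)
The plan is to verify the claimed decomposition directly at the level of characters, invoking the fact that over $\C$ the isomorphism class of a representation of a finite group is determined by its character. Thus it suffices to check, on each conjugacy class of $G$, that the character of $\C[G/\Gamma_i] = (\bbm{1}_{\Gamma_i})\negthickspace\uparrow^G_{\Gamma_i}$ agrees with the sum $\chi_{\bbm{1}} + \chi_{\bbm{1}^-} + \chi_{W^+} + \chi_{X}$.

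As a first sanity check, the dimensions match: $[G:\Gamma_i] = 32/4 = 8$ on one side, and $1+1+2+4=8$ on the other. Next, I would record the character of $\C[G/\Gamma_i]$ on the eleven conjugacy-class representatives indexing the columns of Table~1. This can be done either from the Frobenius formula for an induced character, or more concretely by counting the vertices of the Schreier graph of $G/\Gamma_i$ in \eqref{SchreierGraphs} that are fixed by the given element acting on cosets. The resulting values are the bottom row of Table~1; observe in particular that they coincide for $i=1$ and $i=2$, which is precisely the Gassmann--Sunada condition for the triple $(G,\Gamma_1,\Gamma_2)$.

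The main step is then the column-by-column verification that the bottom row of Table~1 equals the sum of the rows labeled $\bbm{1}$, $\bbm{1}^-$, $W^+$, and $X$. At $1$: $1+1+2+4=8$; at $s^4$: $1+1+2-4=0$; at $s^2$: $1+1-2+0=0$; at $v$: $1+1+2+0=4$; at $s^2v$: $1+1-2+0=0$; at $t$ and at $u$: $1+1+0+0=2$; while at each of $s$, $st$, $su$, $sv$ both $\chi_{W^+}$ and $\chi_X$ vanish, and $\chi_{\bbm{1}}+\chi_{\bbm{1}^-}=1+(-1)=0$ since these classes consist of odd powers of $s$ times an element of $H$. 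All eleven entries agree, so the proposition follows.

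There is no real conceptual obstacle: the argument is an arithmetic bookkeeping exercise whose ingredients --- the character table and the induced character --- have already been established. One could equivalently compute each inner product $\langle \chi_{\C[G/\Gamma_i]},\chi_\rho\rangle$ against each irreducible $\rho$, but this would require tabulating the conjugacy-class sizes, which the column-sum check bypasses.
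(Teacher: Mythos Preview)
Your proof is correct and follows essentially the same approach as the paper: both verify the decomposition at the level of characters using Table~1. The only cosmetic difference is that the paper phrases it as an orthonormal expansion (computing the inner products $\langle\chi_{\C[G/\Gamma_i]},\chi_\rho\rangle$), whereas you check directly that the relevant four rows of the character table sum to the bottom row---an equivalent and arguably tidier verification.
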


Using the Fourier inversion formula \cite{Se}
\[\varepsilon _V = \frac{\dim(V)}{|G|} \sum_{g\in G} \chi _V
(g^{-1})g\]
for the primitive central idempotent $\varepsilon _V$ associated to an irreducible representation $V$ of $G$ one can easily determine bases for the irreducible constituents of the representation $\C[G/\Gamma_i]$ in terms of the bases of cosets. Consider first $\C[G/\Gamma_1]$. Let $u_i$ denote the coset $s^i\Gamma _1$, $0\le i\le 7$.  Let $e_1 = (\varepsilon _{1^+})\cdotp u_0 = \frac1 8 \sum^7_{i=0}u_i$, $e_2 = (\varepsilon_{1^-})\cdotp u_0 = \frac1 8 \sum^7_{i=0}(-1)^iu_i$, $e_3 = (\varepsilon_{W^+})\cdotp u_0 =\frac1 4(u_0-u_2+u_4-u_6)$, $e_4 = (\varepsilon _{W^+})\cdotp u_1 =\frac14(u_1-u_3+u_5-u_7)$, $e_5 = \varepsilon _X\cdotp u_0 =\frac12(u_0-u_4)$, $e_6 = \varepsilon _X\cdotp u_1 =\frac12(u_1-u_5)$, $e_7 = \varepsilon _X\cdotp u_2 =\frac12(u_2-u_6)$, and $e_8 = \varepsilon _X\cdotp u_3 =\frac12(u_3-u_7)$. Then the following is immediate:

\begin{prop}\label{BasesOfIrreps1} Let $e_1,\dotsc,e_8$ be as defined above.  Then:
\begin{itemize}
\item $\{e_1\}$ is a basis of the $\bbm1^+$ summand of $\C[G/\Gamma _1]$.

\item $\{e_2\}$ is a basis of the $\bbm1^-$ summand of $\C[G/\Gamma _1]$.

\item $\{e_3,e_4\}$ is a basis of the $W^+$ summand of $\C[G/\Gamma _1]$.

\item $\{e_5,e_6,e_7,e_8\}$ is a basis of the $X$ summand of $\C[G/\Gamma _1]$.
\end{itemize}
\end{prop}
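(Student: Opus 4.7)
The plan is direct verification using the projection property of primitive central idempotents: for any irreducible $V$ of $G$, the operator $\varepsilon_V$ acts on $\C[G/\Gamma_1]$ as the projection onto its $V$-isotypic component. By Proposition~\ref{p3.2}, each of $\bbm{1}$, $\bbm{1}^-$, $W^+$, and $X$ appears in $\C[G/\Gamma_1]$ with multiplicity one, so $\varepsilon_V\cdot\C[G/\Gamma_1]$ coincides with the corresponding summand and has dimension exactly $\dim V$. Therefore, proving the proposition reduces to confirming the closed-form expressions for the $e_j$ by computing $\varepsilon_V\cdot u_i$ directly from Fourier inversion, and then checking linear independence to secure the required count.

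First, I would compute each idempotent from the character table. The computation for $\varepsilon_X$ is the cleanest: since $\chi_X$ vanishes off the central classes $\{1\}$ and $\{s^4\}$, one immediately obtains $\varepsilon_X = \tfrac{1}{2}(1-s^4)$; combined with $s^4\cdot u_i = u_{i+4}$, read off from the Schreier graph \eqref{SchreierGraphs}, this gives the claimed formulas for $e_5,\ldots,e_8$. For $\varepsilon_{\bbm{1}}$ and $\varepsilon_{\bbm{1}^-}$, the group averages collapse to averages over the eight cosets with the appropriate signs, using that each coset has $|\Gamma_1|=4$ preimages under $G\to G/\Gamma_1$. For $\varepsilon_{W^+}$, one assembles contributions from the five classes on which $\chi_{W^+}$ is nonzero, namely $\{1\}, \{s^4\}, \{s^2,s^6\}, \{v,s^4v\}, \{s^2v,s^6v\}$; since $v=tu\in\Gamma_1$, each $s^iv$ acts on $u_0$ as $s^i$, causing the $v$-contributions to double the $s$-contributions and yielding $e_3=\tfrac{1}{4}(u_0-u_2+u_4-u_6)$. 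The formula for $e_4$ then follows from $\varepsilon_{W^+}\cdot u_1 = s\cdot(\varepsilon_{W^+}\cdot u_0)$, by centrality of the idempotent.

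The basis conclusion is then a support-and-dimension argument. The elements $e_1$ and $e_2$ are nonzero elements of one-dimensional summands, hence bases. The elements $e_3$ and $e_4$ are supported on the disjoint even and odd coset sets $\{u_0,u_2,u_4,u_6\}$ and $\{u_1,u_3,u_5,u_7\}$, so they are linearly independent, and two linearly independent vectors in the two-dimensional $W^+$-summand form a basis. Finally, $e_5,e_6,e_7,e_8$ have pairwise disjoint two-element supports $\{u_0,u_4\}, \{u_1,u_5\}, \{u_2,u_6\}, \{u_3,u_7\}$, so they are linearly independent, and four linearly independent vectors span the four-dimensional $X$-summand.

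The only nontrivial computational step is the bookkeeping for $\varepsilon_{W^+}$, which depends on identifying the eleven conjugacy classes of $G$ correctly and tracking the $s^iv$ contributions through the stabilizer $\Gamma_1$. Everything else is immediate from Fourier inversion combined with the Schreier graph action of $s^i$ on $u_0$, which is why the proposition can legitimately be stated as immediate.
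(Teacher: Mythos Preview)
Your proposal is correct and matches the paper's approach exactly: the paper states the proposition as ``immediate'' from the definitions of the $e_j$ via the Fourier inversion idempotents, and you have simply written out the verification that the paper leaves to the reader. The projection property of the $\varepsilon_V$, the multiplicity-one input from Proposition~\ref{p3.2}, and the disjoint-support linear independence check are precisely what makes it immediate.
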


Similarly, turning to $G/\Gamma _2$, let $v_i$ denote the coset $s^i\Gamma _2$, $0\le i\le 7$.
\begin{prop}\label{BasesOfIrreps2}
Let $f_1,f_2\ldots f_7$ be defined by the same formulas defining the $e_i$, but with each $u_i$ replaced by $v_i$. Then:
\begin{itemize}
\item $\{f_1\}$ is a basis of the $\bbm1^+$ summand of $\C[G/\Gamma _2]$.

\item $\{f_2\}$ is a basis of the $\bbm1^-$ summand of $\C[G/\Gamma _2]$.

\item $\{f_3,f_4\}$ is a basis of the $W^+$ summand of $\C[G/\Gamma _2]$.

\item $\{f_5,f_6,f_7,f_8\}$ is a basis of the $X$ summand of $\C[G/\Gamma _2]$.
\end{itemize}
\end{prop}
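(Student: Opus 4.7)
The plan is to verify Proposition~\ref{BasesOfIrreps2} by computing the $G$-action on the claimed basis vectors $f_1, \ldots, f_8$ directly, using the right-hand Schreier graph in~\eqref{SchreierGraphs} to read off $s \cdot v_i = v_{i+1}$, $t \cdot v_i = v_{-i}$, and $u \cdot v_i = v_{3i+4}$ (indices mod $8$). The only discrepancy from the $\Gamma_1$ situation is the shift by $4$ in the $u$-action, coming from the twist by $s^4$ in $\Gamma_2 = \{1, t, s^4u, s^4tu\}$. Once each $f_j$ (or each group of $f_j$'s) is shown to span a $G$-invariant subspace whose character matches that of the claimed irreducible, Proposition~\ref{p3.2} combined with the multiplicity-freeness of the decomposition forces identification with the correct summand, and the count $1 + 1 + 2 + 4 = 8$ shows that the listed vectors exhaust a basis of $\C[G/\Gamma_2]$.

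I would first dispatch the one-dimensional cases by checking that $f_1$ is fixed by $s, t, u$ (so it spans $\bbm{1}$) and that $f_2$ is scaled by $-1, +1, +1$ respectively (so it spans $\bbm{1}^-$); the only nontrivial point is for $u$, and it amounts to the observation $(-1)^{3i+4} = (-1)^i$. For the four-dimensional $X$-summand I would use $\varepsilon_X = \tfrac12(1 - s^4)$, which is immediate from the character table, so each $f_j = \tfrac12(v_{j-5} - v_{j-1})$ for $j \in \{5, 6, 7, 8\}$ is visibly equal to $\varepsilon_X \cdot v_{j-5}$ and hence lies in the $X$-isotypic component; their disjoint supports give linear independence, and the component dimension is $\dim X = 4$.

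The main step is the $W^+$-verification. A direct computation yields $sf_3 = f_4$, $sf_4 = -f_3$, $tf_3 = f_3$, $tf_4 = -f_4$, $uf_3 = f_3$, and $uf_4 = -f_4$; the alternating-sign patterns defining $f_3, f_4$ absorb the additive $4$-shift in the $u$-action, so the matrices coincide with those obtained in the $\Gamma_1$ case. The span $\langle f_3, f_4\rangle$ is then $G$-invariant with character $(2, 2, -2, 2, -2, 0, 0, 0, 0, 0, 0)$ on the class representatives of Table~1. The only obstacle worth flagging is to distinguish $W^+$ from $W^-$: these agree on $s, t, u$ individually and differ only in $\chi(v)$ and $\chi(s^2 v)$, so it is essential to confirm that $v = tu$ acts as $+1$ (not $-1$) on $\langle f_3, f_4\rangle$. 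This follows from the above relations, since $tu\, f_3 = t f_3 = f_3$ and $tu\, f_4 = t(-f_4) = f_4$, giving $\chi(v) = +2$ and identifying the summand as $W^+$.
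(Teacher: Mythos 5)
Your proof is correct and supplies details that the paper leaves implicit. The paper treats this proposition (like its $\Gamma_1$ analogue, Proposition~\ref{BasesOfIrreps1}) as an immediate consequence of the Fourier-inversion formula for the primitive central idempotents $\varepsilon_V$: since $\varepsilon_V\cdot v_j$ lands in the $V$-isotypic component by construction and the decomposition in Proposition~\ref{p3.2} is multiplicity-free, the assertions follow once one checks that the formulas for the $f_i$ really are the images $\varepsilon_V\cdot v_j$. Your direct verification from the $G$-action $s\cdot v_i = v_{i+1}$, $t\cdot v_i = v_{-i}$, $u\cdot v_i = v_{3i+4}$ is a sound and arguably more transparent alternative, and you are right to flag the $W^+$ versus $W^-$ distinction as the one genuinely delicate point: the $f_i$ are declared to be the explicit linear combinations ``with $u_i$ replaced by $v_i$,'' not a priori images under $\varepsilon_V$ for the new $G$-set, and since the $u$-action on $G/\Gamma_2$ differs from that on $G/\Gamma_1$ by the additive $4$-shift, it is not automatic that $\span\{f_3,f_4\}$ affords $W^+$ rather than $W^-$. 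Your computation $vf_3=f_3$, $vf_4=f_4$, hence $\chi(v)=2$, settles this cleanly. The only very minor omission is an explicit remark that $f_3$ and $f_4$ are linearly independent (they have disjoint supports over even versus odd indices, exactly the observation you make for $f_5,\ldots,f_8$), but this is immediate.
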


We now make explicit the most general equivalence of the permutation representations $\C[G/\Gamma _1]$ and $\C[G/\Gamma _2]$; since these representations are both equivalent to $\bbm1\oplus \bbm1^- \oplus W^+ \oplus X$, this reduces to determining all the intertwining isomorphisms of the latter.

\begin{remarks}\label{MapsOfSubreps}
From the above, the following assertions are immediate:
\begin{enumerate}
\item\label{1+->1+}
For any nonzero scalar $a$, the map
    sending $e_1\mapsto af_1$ is a $G$-isomorphism of the $\bbm1^+$
    summand of $\C[G/\Gamma _1]$ onto the $\bbm1^+$ summand of
    $\C[G/\Gamma _2]$.

\item For any nonzero scalar $b$, the map
    sending $e_2\mapsto bf_2$ is a $G$-isomorphism of the $\bbm1^-$
    summand of $\C[G/\Gamma _1]$ onto the $\bbm1^-$ summand of
    $\C[G/\Gamma _2]$.

\item The actions of $s$, $t$ and $u$ on $e_3$ and $e_4$ coincide with their actions on $f_3$ and $f_4$; thus for any nonzero $c$, the map sending $e_3 \mapsto cf_3$ and $e_4 \mapsto cf_4$ is a $G$-isomorphism of the $W^+$ summand of $\C[G/\Gamma _1]$ onto the $W^+$ summand of $\C[G/\Gamma _2]$. Moreover, by Schur's Lemma, this is the most general such isomorphism.

\item One easily computes that the most general $G$-isomorphism of $\span\{e_5,e_6,e_7,e_8\}$ onto $\span\{f_5,f_6,f_7,f_8\}$ is given by
\[\left[
\begin{array}{cccc}
0 &d&0&-d\\
d &0 &d&0\\
0 &d&0&d\\
-d &0&d&0
\end{array}
\right],\]
where $d\ne 0$. Thus, letting $h_5 = f_6-f_8$, $h_6=f_5+f_7$, $h_7=f_6+f_8$, $h_8=-f_5+f_7$, we see that, relative to the bases $\{e_5,e_6,e_7,e_8\}$ and $\{h_5,h_6,h_7,h_8\}$, the most general isomorphism of the $X$ summand of $\C[G/\Gamma _1]$ onto the $X$ summand of $\C[G/\Gamma _2]$ is given by $e_i \mapsto dh_i$, $i=5,6,7,8$.

\item\label{GeneralIntertwiningMap} Finally, let $h_i=f_i$ for $1\le i\le 4$. Then the most general $G$-isomorphism $\Phi : \C[G/\Gamma _1] \to \C[G/\Gamma _2]$ is given, relative to the bases $\{e_1,e_2,\ldots,e_8\}$ and $\{h_1,h_2,\ldots,h_8\}$, by the matrix  diag$(a,b,c,c,d,d,d,d)$, where $abcd\ne0$.  Changing bases, we see that relative to the natural coset bases $\{u_0,u_1\ldots u_7\}$ and $\{v_0,v_1\ldots v_7\}$, the general intertwining isomorphism $\Phi$ is given by the matrix
\begin{equation}\label{TransplantationMatrix}
A = \left[
\begin{array}{cccccccc}
\alpha  &\beta  &\gamma  &\delta  &\alpha  &\delta  &\gamma  &\beta \\
\beta  &\alpha  &\beta  &\gamma  &\delta  &\alpha  &\delta  &\gamma \\
\gamma  &\beta  &\alpha  &\beta  &\gamma  &\delta  &\alpha  &\delta \\
\delta  &\gamma  &\beta  &\alpha  &\beta  &\gamma  &\delta  &\alpha \\
\alpha  &\delta  &\gamma  &\beta  &\alpha  &\beta  &\gamma  &\delta \\
\delta  &\alpha  &\delta  &\gamma  &\beta  &\alpha  &\beta  &\gamma \\
\gamma  &\delta  &\alpha  &\delta  &\gamma  &\beta  &\alpha  &\beta \\
\beta  &\gamma  &\delta  &\alpha  &\delta  &\gamma  &\beta  &\alpha
\end{array}
\right],
\end{equation}
where $\alpha  = \frac18(a+b+2c)$, $\beta  = \frac18(a-b+4d)$, $\gamma =\frac18(a+b-2c)$, $\delta =\frac18(a-b-4d)$ and $\alpha $, $\beta $, $\delta $ are chosen so that $abcd \ne 0$.
\end{enumerate}
\end{remarks}

\begin{remarks} The above conclusions can also be reached by more elementary computations:
\begin{enumerate}
\item The transplantation matrix $A$ relative to the bases given by the cosets can be computed in a na\"ive way simply by determining the conditions on the entries of $A$ that are forced by the requirement that $A$  intertwine the two permutation representations.

\item The decomposition of the representations $\C[G/\Gamma _i]$ into irreducible representations can also be determined by considering the equation $s^8 -1 =0$ and looking at the possible degrees of irreducible constituents. From these irreducible decompositions, one can recover Buser's transplantation of eigenfunctions on the isospectral Riemann surfaces described in \cite{Bu2}.

\item For a systematic study of transplantation with many interesting examples, see \cite{Herbrich}.
\end{enumerate}
\end{remarks}

A simple choice of $A$ for our purposes is given by setting $c=d=2$, $a=6$, $b=-2$, so $\alpha =1$, $\beta =2$, $\gamma =\delta =0$. We can now describe explicitly how to transplant a Neumann eigenfunction from $M_1$ to $M_2$. Recall that $M_i$ is constructed by gluing together copies of a fundamental tile $\ms{T}$, the copies labeled by the cosets $G/\Gamma _i$. Let $F$ be a $\lambda $-eigenfunction on $M_1$; let $F_i$ denote its restriction to the tile labeled by the coset $s^i\Gamma _1$. Then $F$ can be represented by the vector of functions on $\ms{T}$ given by $[F_0~F_1~F_2\cdots F_7]$. (Note that there is no ambiguity about what it means to regard an $F_i$ as a function on $\ms{T}$; there is a unique isometry of $\ms{T}$ with each tile of $M_1$ or $M_2$ that preserves the labeling of the boundary edges.) Now let $H$ denote the function on $M_2$ whose restriction $H_i$ to the tile of $M_2$ labeled $s^i\Gamma _2$ is given by the matrix product
\[\bmat{H_0&H_1&H_2&\dotsc&H_7}= \bmat{F_0&F_1&F_2&\dotsc &F_7}A,\]
where $A$ is the intertwining matrix defined above. If $H$ is smooth, then it is certainly an eigenfunction, since this is a local condition; thus checking Neumann isospectrality reduces to checking that:
\begin{enumerate}
\item The functions $H_i$ fit together smoothly across the interfaces
between tiles;

\item The function $H$ satisfies Neumann boundary conditions.
\end{enumerate}

These assertions are easily checked by inspection of the paper models or by looking at the graphs \eqref{SchreierGraphs}; we illustrate briefly.  Consider the interface between tiles 0 and 4 of $M_2$; it is depicted in the figure below.
\begin{center}
\includegraphics[width=3.5in]{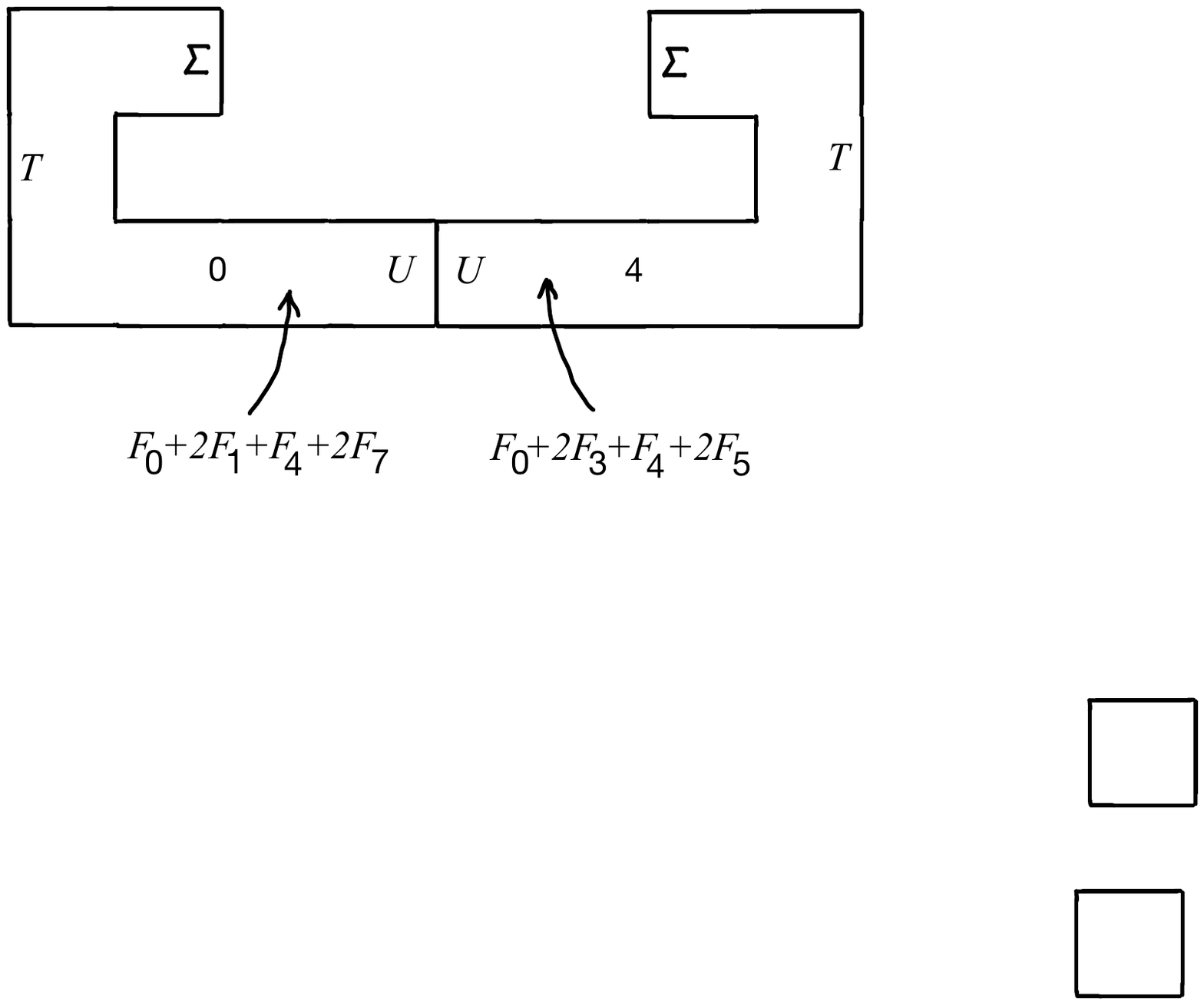}
\end{center}
We will show that $H_0$ and $H_4$ fit together smoothly across this $U$-edge. It suffices to show that the following condition is satisfied:
\begin{equation}\label{SmoothnessCondition}
\begin{split}
\text{$H_0$ and $H_4$ coincide along their common $U$-edge, and their}\\\text{ inner normal derivatives there are negatives of each other.}
\end{split}
\end{equation}

\noindent Indeed, if \eqref{SmoothnessCondition} holds, then by Green's formula the function $H$ (which equals $H_0$ on tile 0 and $H_4$ on tile 4) is a local weak solution of $\Delta u = \lambda u$, so by regularity of local weak solutions, $H$ is smooth along the $U$-edge. We now verify that \eqref{SmoothnessCondition} holds. The functions $H_0$ and $H_4$ are given by $H_0=F_0+2F_1+F_4+2F_7$, $H_4 = F_0 +2F_3+F_4 +2F_5$. Inspection of Figure~4 or of the graphs \eqref{SchreierGraphs} shows that in $M_1$, tiles 1 and 3 meet along a $U$-edge, so $2F_1$ and $2F_3$ and their derivatives fit together smoothly  along their common $U$-edge; similarly, $2F_7$ and $2F_5$ fit together. As for $F_0$ and $F_4$, they have vanishing normal derivatives on the edge. Thus $H_0$ and $H_4$ fit together smoothly across the $U$-interface. To see that $H$ satisfies Neumann boundary conditions on $M_2$, consider for example the tile 0; we will show that $H_0$ satisfies the Neumann condition on the $T$-edge. First, the $T$-edges of tiles 0 and 4 of $M_1$ are boundary edges, so $F_0$ and $F_4$ have vanishing normal derivative on the $T$-edge. Figure~4 and the graphs \eqref{SchreierGraphs} also show that tiles 1 and 7 of $M_1$ share a $T$-edge; since the (unique) isometry used to identify tiles 1 and 7 of $M_1$ is a reflection in their common $T$-edge, it is clear that on the $T$-edge of tile 0 in $M_2$, the normal derivatives of $F_1$ and $F_7$ are negatives of each other. Thus the normal derivative of $H_0 = F_0 +2F_1+F_4+2F_7$ vanishes on the $T$-edge, as desired.  One performs a similar verification on each tile of $M_2$.

Thus any Neumann eigenfunction on $M_1$ can be transplanted to a Neumann eigenfunction on $M_2$; using the inverse of the matrix $A$, one can transplant eigenfunctions from $M_2$ to $M_1$. Thus $M_1$ and $M_2$ are Neumann isospectral.

\section{A folklore argument of Fefferman}\label{Fefferman}

In \cite{GWW}, planar domains that are isospectral for either Neumann or Dirichlet boundary conditions were exhibited. The domains are underlying spaces of orbifolds (with boundary) $\cO_1$ and $\cO_2$ whose singular sets consist of disjoint mirror arcs, and the Neumann isospectrality was established by showing that $\cO_1$ and $\cO_2$ are isospectral as orbifolds. The Dirichlet isospectrality follows because there are  surfaces (with boundary) $S_1$ and $S_2$ that can be viewed as the orientation double covers of the orbifolds $\cO_1$ and $\cO_2$ (obtained by ``doubling'' the orbifolds along the mirror edges); the surfaces $S_1$ and $S_2$ are themselves isospectral by Sunada's Theorem; these examples were discovered by Buser \cite{Bu1}. In fact, for each of $S_1$ and $S_2$, the reflection symmetry decomposes the space of smooth functions on $S_i$ as the direct sum of a $(+1)$-eigenspace (the reflection-invariant functions) and a $(-1)$-eigenspace (the reflection-anti-invariant functions); the latter are the functions satisfying the Dirichlet boundary condition, and using this decomposition, one easily deduces Dirichlet isospectrality from the isospectrality of $S_1$ and $S_2$ and of $\cO_1$ and $\cO_2$. Thus $|\cO_1|$ and $|\cO_2|$ are Dirichlet isospectral domains. One can also make explicit a Dirichlet transplantation matrix (see \cite{Be3}).

As was observed by Peter Doyle in connection with some of the examples in \cite{BCDS}, if the orientation double covers are not isospectral, then the above argument fails, so there is no reason to expect $M_1$ and $M_2$ to be Dirichlet isospectral.  We will show in fact that our $M_1$ and $M_2$ have a different lowest Dirichlet eigenvalue, at least when $M_1$ and $M_2$ are constructed using a more symmetrical fundamental tile than that used in section~\ref{Construction}. The extra symmetry will permit an adaptation of an argument due to C.~Fefferman showing that the two planar domains $S$ and $C$ shown below are not Dirichlet isospectral.
\begin{center}
\includegraphics[width=4.5in]{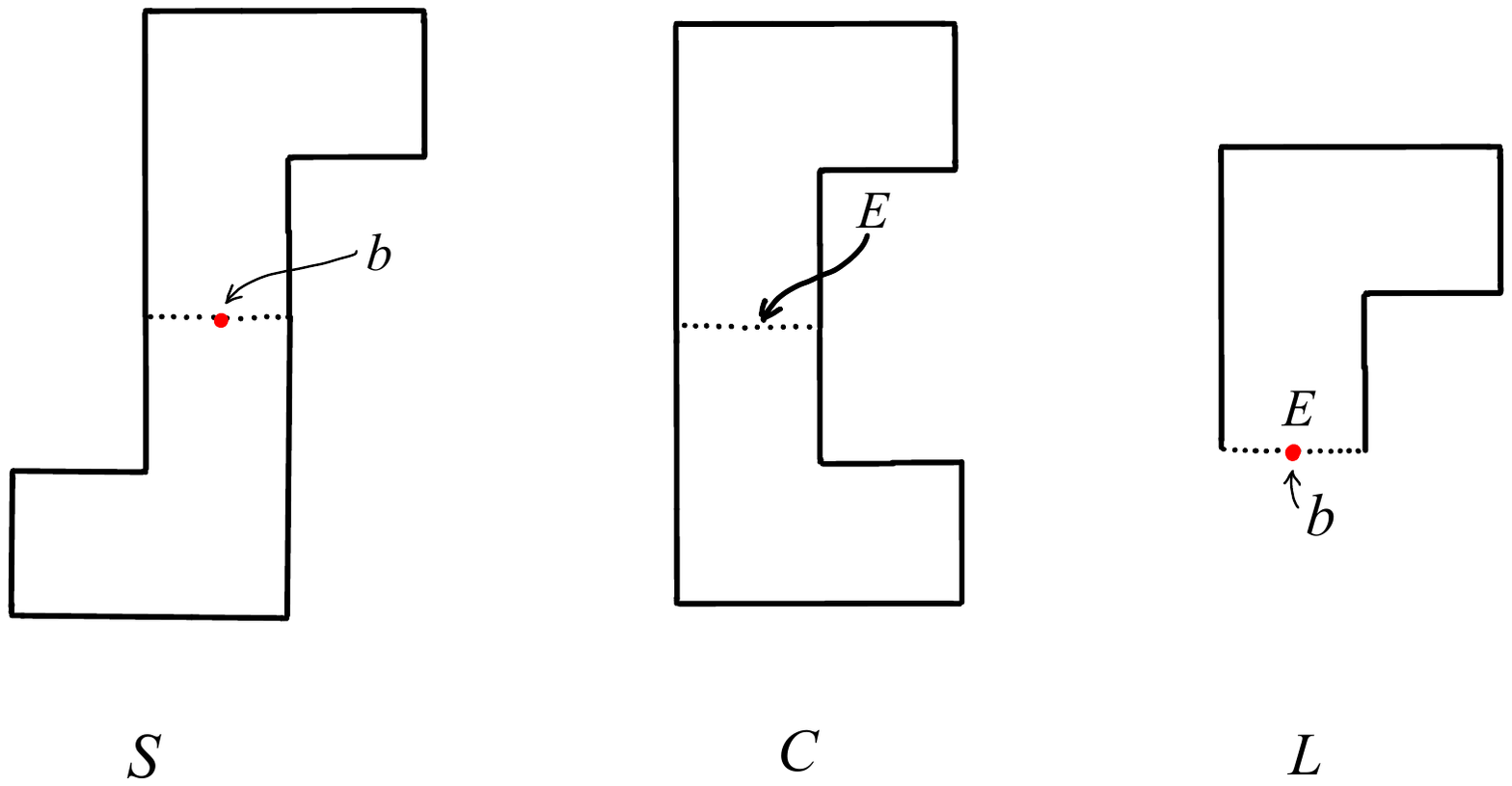}
\end{center}
The tile $L$ on the right above is the upper half of both the domains $S$ and $C$.

Fefferman's argument exploits the fact that the domain $C$ has a reflection symmetry (reflection through the dotted line $E$), while the domain $S$ has an involutive rotation symmetry (rotation by $\pi$ around the barycenter $b$). We sketch Fefferman's argument below.

For any Riemannian manifold (with boundary) $M$ and $p$-forms $\alpha$, $\beta$ on $M$, write $(\cdot,\cdot)_M$ for the $L^2$ inner product: $(\alpha,\beta)_M= \int_M \langle \alpha,\beta \rangle\,dvol $, where $\langle\cdot,\cdot\rangle$ is the pointwise inner product on $p$-covectors and $dvol$ is the Riemannian measure. We recall (see \cite{Be1} or \cite{C}) that, for a Riemannian manifold $M$ with boundary, the lowest nonzero Neumann eigenvalue $\mu$ is given by the infimum of the Rayleigh quotients:
\begin{equation}\label{NeumannRayleighQuotient}
\mu=\inf_{f\in H^1,\,f\perp1}R_M(f) = \inf_{f\in H^1,\,f\perp1} \frac{(df,df)_M}{(f,f)_M},
\end{equation}
where $f$ ranges over the space $C^{\infty}(M)$ of smooth functions on $M$ (or equivalently, over its $H^1$-completion, the Sobolev space $H^1$ of functions having one distributional derivative in $L^2$) that are $L^2$-orthogonal to the constant functions. The lowest Dirichlet eigenvalue $\lambda $ is given by
\[\lambda=\inf_{f\in H^1_0}R_M(f)=\inf_{f\in H_0^1}\frac{(df,df)_M}{(f,f)_M},\]
the infimum of the Rayleigh quotients with $f$ ranging over the space of smooth functions with compact support in the interior of $M$ (or equivalently, over its $H^1$-completion $H^1_0$). Finally, for mixed boundary conditions (i.e., Neumann boundary conditions on an open submanifold $N$ of $\partial M$, but Dirichlet conditions on an open submanifold $D\subseteq \partial M$ such that $\partial M=\overline{N\cup D}$), one allows $f$ to range over the space of smooth functions on $M$ supported away from $D$ (or over its Sobolev completion $H^1_{\operatorname{mixed}}$). In each case, a function $f$ in the pertinent Sobolev completion whose Rayleigh quotient realizes the infimum is an eigenfunction for the lowest nonzero eigenvalue.
\begin{thm}(Fefferman)
The domains $S$ and $C$ depicted above are not Dirichlet isospectral.  In fact, if $\lambda_{\Omega}$ denotes the lowest Dirichlet eigenvalue of a domain $\Omega\subseteq\R^2$, then $\lambda_C<\lambda_S$.
\end{thm}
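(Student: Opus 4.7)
The idea is to use the Dirichlet form of the Rayleigh quotient variational principle and transplant the first Dirichlet eigenfunction of $S$ into a test function on $C$ of equal Rayleigh quotient. This immediately gives $\lambda_C \leq \lambda_S$; the strict inequality will then follow because this test function cannot actually be the first eigenfunction of $C$.

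Let $\Phi_S$ denote the first Dirichlet eigenfunction of $S$, normalized to be positive. By simplicity of the first eigenvalue, positivity of $\Phi_S$, and invariance of $S$ under the rotation $R_b$ by $\pi$ about $b$, one gets $\Phi_S \circ R_b = \Phi_S$. Set $f := \Phi_S|_L$ and define $\tilde f$ on $C$ by $\tilde f = f$ on the upper half $L$ of $C$ and $\tilde f = f \circ R_E$ on the lower half $R_E(L)$, where $R_E$ denotes reflection in $E$. Then $\tilde f \in H^1_0(C)$: its traces from above and below $E$ both agree with $f|_E$, and it vanishes on $\partial C$ because $f$ vanishes on the outer boundary $\partial L \setminus E$. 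Since $\Phi_S$ is $R_b$-invariant and $\tilde f$ is $R_E$-invariant, both Rayleigh quotients split as twice the corresponding integrals over $L$, giving
$$ R_C(\tilde f) \;=\; \frac{(df,df)_L}{(f,f)_L} \;=\; R_S(\Phi_S) \;=\; \lambda_S, $$
so $\lambda_C \leq \lambda_S$ by the variational characterization.

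For the strict inequality, suppose $\lambda_C = \lambda_S$. Then $\tilde f$ realizes the infimum, so it is a first Dirichlet eigenfunction of $C$; being positive and the eigenvalue being simple, $\tilde f$ is (up to scalar) \emph{the} first eigenfunction of $C$, hence smooth on the interior of $C$ by elliptic regularity, in particular smooth across the interior arc $E$. But the built-in reflection symmetry $\tilde f(x,y) = \tilde f(x,-y)$ forces $\partial_y \tilde f|_{y=0^+} = -\partial_y \tilde f|_{y=0^-}$, so smoothness (no jump in normal derivative) yields $\partial_y f \equiv 0$ on $E$. Back on $S$, the rotational invariance $\Phi_S(x,y)=\Phi_S(-x,-y)$ gives $\partial_y \Phi_S(x,0) = -\partial_y \Phi_S(-x,0)$, so $\partial_y \Phi_S$ is odd in $x$ along $E$; together with its vanishing on the half of $E$ lying in $L$, this forces $\partial_y \Phi_S \equiv 0$ on all of $E$.

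The main obstacle is now to convert this vanishing into a contradiction. By Schwarz reflection across the interior arc $E$ of $S$, the identity $\partial_y \Phi_S \equiv 0$ on $E$ forces $\Phi_S(x,-y) = \Phi_S(x,y)$ in a neighborhood of $E$ in $S$; combining with rotation invariance yields $\Phi_S(x,y) = \Phi_S(-x,y)$ in such a neighborhood. Let $R_v$ denote the reflection in the vertical line through $b$. Since $L$ is not symmetric under $R_v$ (this is precisely what distinguishes $S$ from $C$), the domain $R_v(S)$ differs from $S$, and by unique continuation applied to the real-analytic function $\Phi_S - \Phi_S\circ R_v$ on $S \cap R_v(S)$, the local identity extends along the connected component containing the neighborhood of $E$ to reach a boundary point $q \in \partial S$ lying in the interior of $R_v(S)$; there $\Phi_S(q) = 0$ by the Dirichlet condition, while $\Phi_S(R_v(q)) > 0$ by positivity of $\Phi_S$ in the interior of $S$, contradicting $\Phi_S = \Phi_S \circ R_v$. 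Hence $\lambda_C < \lambda_S$, as claimed.
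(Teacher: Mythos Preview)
Your proof is correct and rests on the same three ingredients as the paper---the variational characterization, the involutive symmetries of $S$ and $C$, and unique continuation for the strict inequality---but is packaged differently. The paper routes everything through the mixed problem on $L$ (Neumann on $E$, Dirichlet on the remaining edges): it shows that $f_C|_L$ is the ground state for this mixed problem while $f_S|_L$ is merely an admissible competitor, whence $\lambda_C \le \lambda_S$; equality then forces $f_S|_L = f_C|_L$, and a direct analytic continuation of $f_S$ and $f_C$ into the overlap $\Gamma \subset S\cap C$ produces the contradiction at a boundary edge of $C$ lying in the interior of $S$. Your even-reflection transplant of $\Phi_S|_L$ to $C$ is equivalent and equally clean for the non-strict inequality. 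For strictness, however, your detour through the auxiliary vertical reflection $R_v$ and the domain $R_v(S)$ is more work than needed: once you know $\tilde f$ is the first Dirichlet eigenfunction of $C$, you already have two real-analytic eigenfunctions $\tilde f$ (on $C$) and $\Phi_S$ (on $S$) that coincide on $L$, so unique continuation on the connected overlap of $S$ and $C$ immediately yields a point of $\partial C$ in the interior of $S$ where $\tilde f=0$ but $\Phi_S>0$. (Incidentally, your sentence about ``the half of $E$ lying in $L$'' is confusing, since $E$ is precisely the bottom edge of $L$; the vanishing of $\partial_y\Phi_S$ on all of $E$ follows directly from the smoothness of $\tilde f$ across $E$, and the oddness argument is superfluous.)
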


\begin{proof} Fefferman's argument runs as follows.  Let $f_C$ be a normalized (i.e., of unit $L^2$-norm) eigenfunction on $C$ for the lowest Dirichlet eigenvalue $\lambda_C$ of $C$.  By Courant's nodal domain theorem (see \cite{C}), $f_C$ is never zero on the interior $C^{\circ}$ of $C$, so without loss of generality, assume that $f_C>0$ on $C^{\circ}$.  The eigenspace associated with the lowest eigenvalue is one-dimensional, since no two functions that are strictly positive on $C^{\circ}$ can be $L^2$-orthogonal.  Let $\varrho$ be the obvious reflection symmetry of $C$ (reflection through the line $E$ in the illustration above).  Then $f_C\circ\varrho$ is another normalized $\lambda_C$-eigenfunction on $C$, so $f_C\circ\varrho=\pm f_C$.  But $f_C\ge0$, so $f_C\circ\varrho\ge0$, and thus $f_C\circ\varrho=f_C$.   Thus $f_C$ is invariant under the reflection in the segment $E$, so restricting $f_C$ to the upper half $L$ of $C$ yields a function $f_L:=(f_C)\vert_L$ that is zero on the three edges of $L$ other than $E$, but has zero normal derivative on $E$. Also, $f_L$ is strictly positive on the interior of $L$.  Thus $f_L$ is an eigenfunction on $L$ corresponding to the lowest eigenvalue for the following mixed problem: Neumann boundary conditions on the edge $E$ and Dirichlet boundary conditions on the other three edges of $L$.  Thus by the variational characterization of eigenvalues (see \cite{Be1}), $(f_C)\vert_L$ realizes the infimum of the Raleigh quotients
\[\inf_{f\in H^1_{\operatorname{mixed}}}R_L(f)=\inf_{f\in H^1_{\operatorname{mixed}}}\frac{(df,df)_{L}}{(f,f)_{L}},\]
where $H^1_{\operatorname{mixed}}$ is the Sobolev space for the mixed problem, the $H^1$-completion of the space of smooth functions supported away from the three edges of $L$ other than $E$.  Because of the reflection invariance of $f_C$,
\[\frac{(df_C,df_C)_C}{(f_C,f_C)_C}=2\frac{(df_C,df_C)_L}{(f_C,f_C)_L}.\]
Now consider a normalized eigenfunction $f_S$ for the lowest Dirichlet eigenvalue $\lambda_S$ of $S$.  By an analogous argument to that above, $f_S$ satisfies $f_S=f_S\circ\sigma$, where $\sigma$ is the involutive rotation symmetry of $S$.  It follows that
\[\frac{(df_S,df_S)_S}{(f_S,f_S)_S}=2\frac{(df_S,df_S)_L}{(f_S,f_S)_L}.\]

The restriction $(f_S)\vert_L$ to $L$ vanishes on the three edges of $L$ other than $E$, i.e., $(f_S)\vert_L\in H^1_{\operatorname{mixed}}$.  Since $(f_C)\vert_L$ realizes the infimum and $f_S\in H^1_{\operatorname{mixed}}$, it follows that
\begin{equation}\label{RayleighQuotIneq}
\frac{(df_C,df_C)_L}{(f_C,f_C)_L}\le\frac{(df_S,df_S)_L}{(f_S,f_S)_L}.
\end{equation}
If equality holds, then $(f_S)\vert_L$ is also a nonnegative normalized eigenfunction for the mixed problem, so $(f_S)\vert_L=(f_C)\vert_L$.  Since eigenfunctions are real analytic, it follows that $f_C $ and $f_S$ must agree upon their common domain of definition when the top halves of $S$ and $C$ are superimposed, i.e., on the domain $\Gamma$ depicted below:
\begin{center}
\includegraphics[width=1.25in]{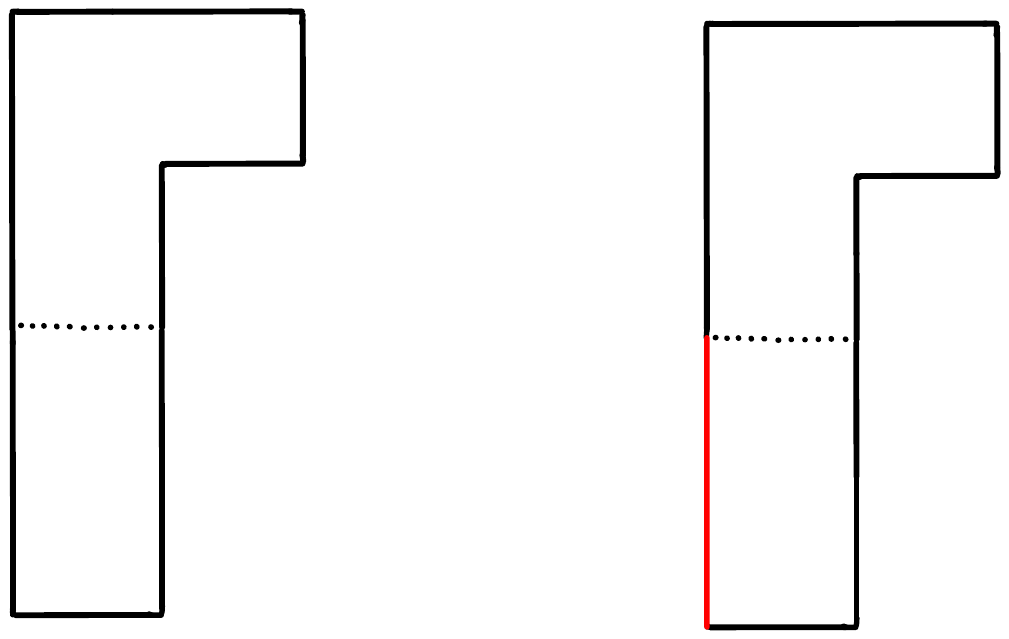}
\end{center}
But $f_C$, as a Dirichlet eigenfunction on $C$, must vanish on the red edge of $\Gamma$, while $f_S$ cannot vanish there, since it is strictly positive on the interior of $S$.  This contradiction implies that the inequality \eqref{RayleighQuotIneq} is strict.  Thus
\[\lambda_C=\frac{(df_C,df_C)_C}{(f_C,f_C)_C}=\frac{2(df_C,df_C)_L}{2(f_C,f_C)_L}<\frac{2(df_S,df_S)_L}{2(f_S,f_S)_L}=\frac{(df_S,df_S)_S}{(f_S,f_S)_S}=\lambda_S,\]
so the lowest eigenvalue of $C$ is smaller than the lowest eigenvalue of $S$.
\end{proof}

%%%%%%%%%%%%%%%%%%%%%%%%%%%%%%%%%%%%%%%%%%%%%%%%%%%%
%%%%%%%%%%%%%%%%%%%%%%%%%%%%%%%%%%%%%%%%%%%%%%%%%%%%
\section{Neumann isospectral but not Dirichlet isospectral surfaces with boundary}\label{NeumannNotDirichlet}
We turn now to an adaptation of Fefferman's argument to our setting.  The bordered surfaces $M_1$ and $M_2$ are the underlying spaces of orbifolds $\cO_1$ and $\cO_2$ constructed exactly as in section~\ref{Construction}, but using a $Y$-shaped fundamental tile: the underlying space $Y:=\abs{\cO_0}$ of the orbifold $\cO_0$ depicted below rather than the tile of Figure~5.
\begin{center}
\includegraphics[width=1.85in]{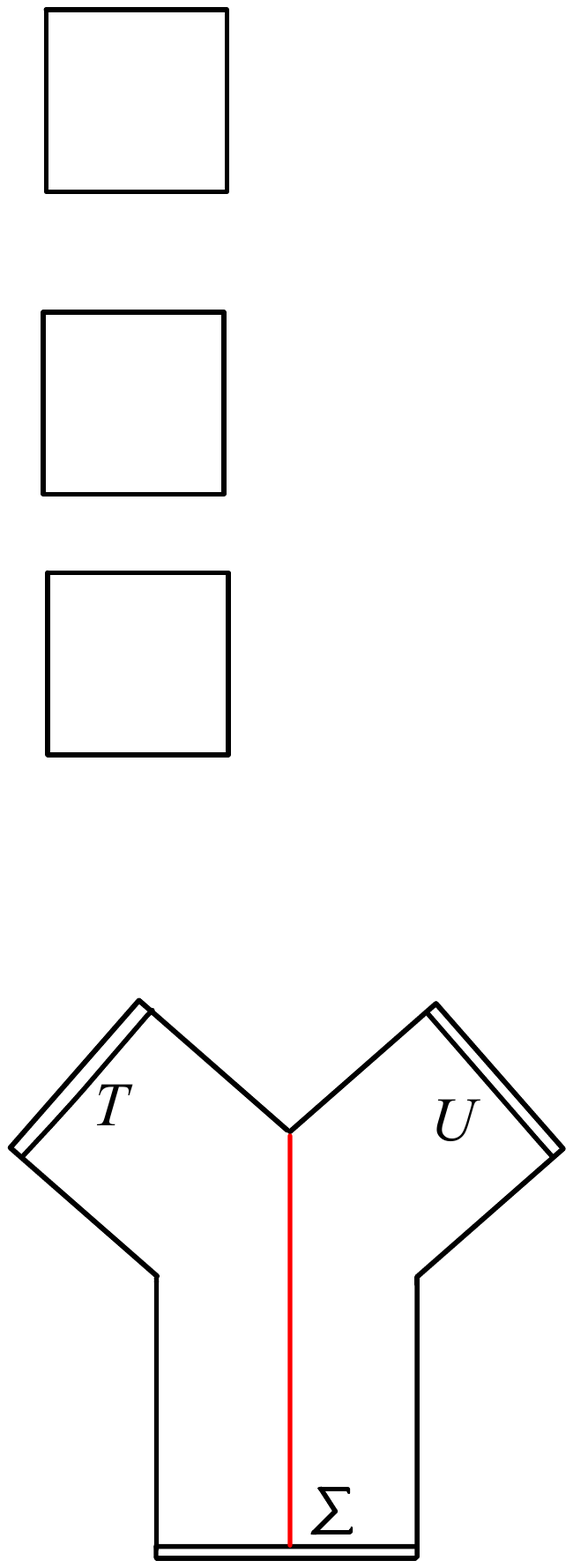}
\end{center}
This tile has a symmetry that will be exploited in what follows.

\noindent The manifolds $M_1$ and $M_2$ thus obtained are depicted in Figure~6; as before, both an embedded and an immersed version of $M_1$ are shown.

\begin{center}
\includegraphics[width=6.75in]{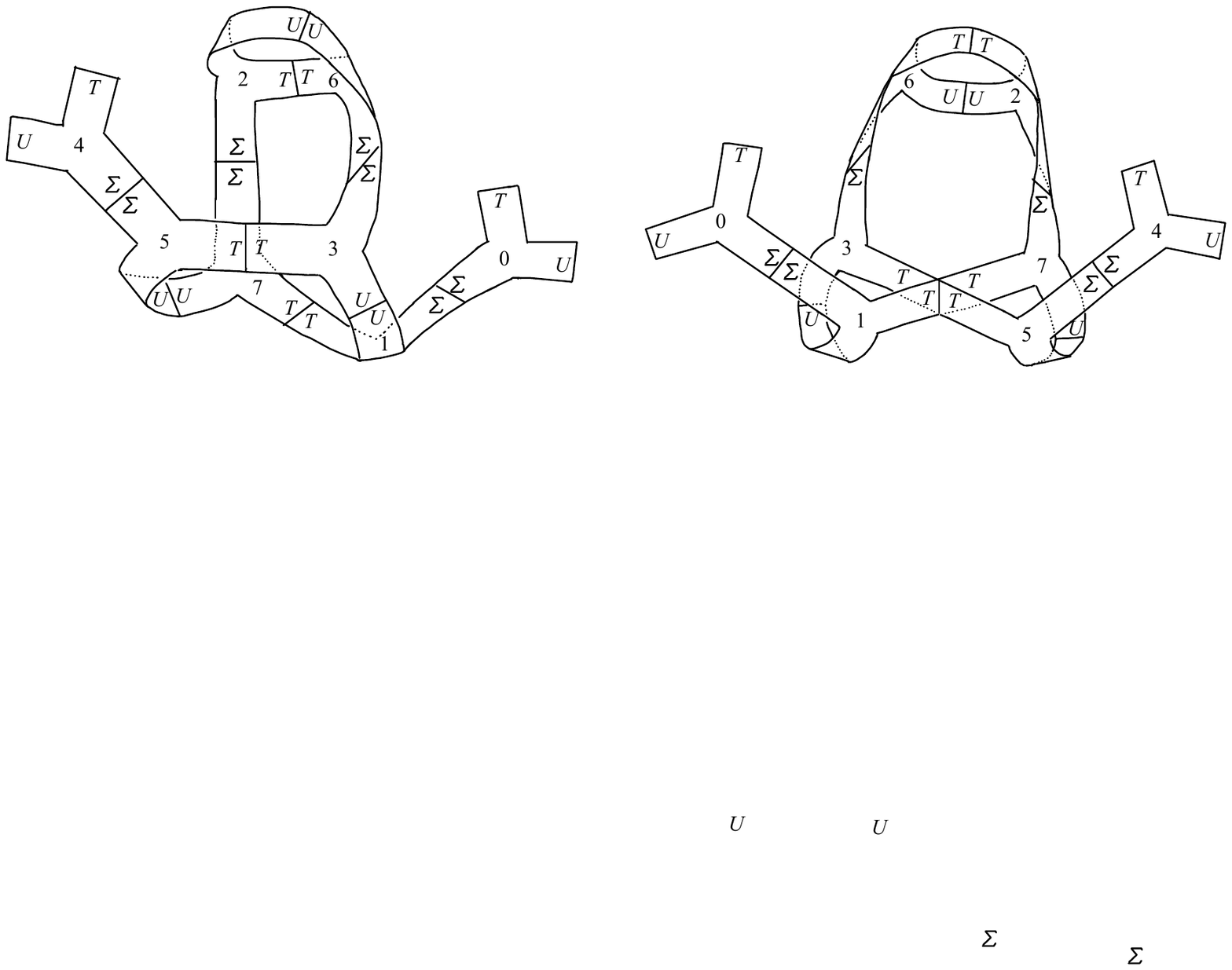}
$\qquad\qquad M_1=\abs{\cO_1}\qquad\qquad\qquad\qquad\qquad\qquad\qquad\qquad\qquad M_1=\abs{\cO_1}$\linebreak
\includegraphics[width=3.0in]{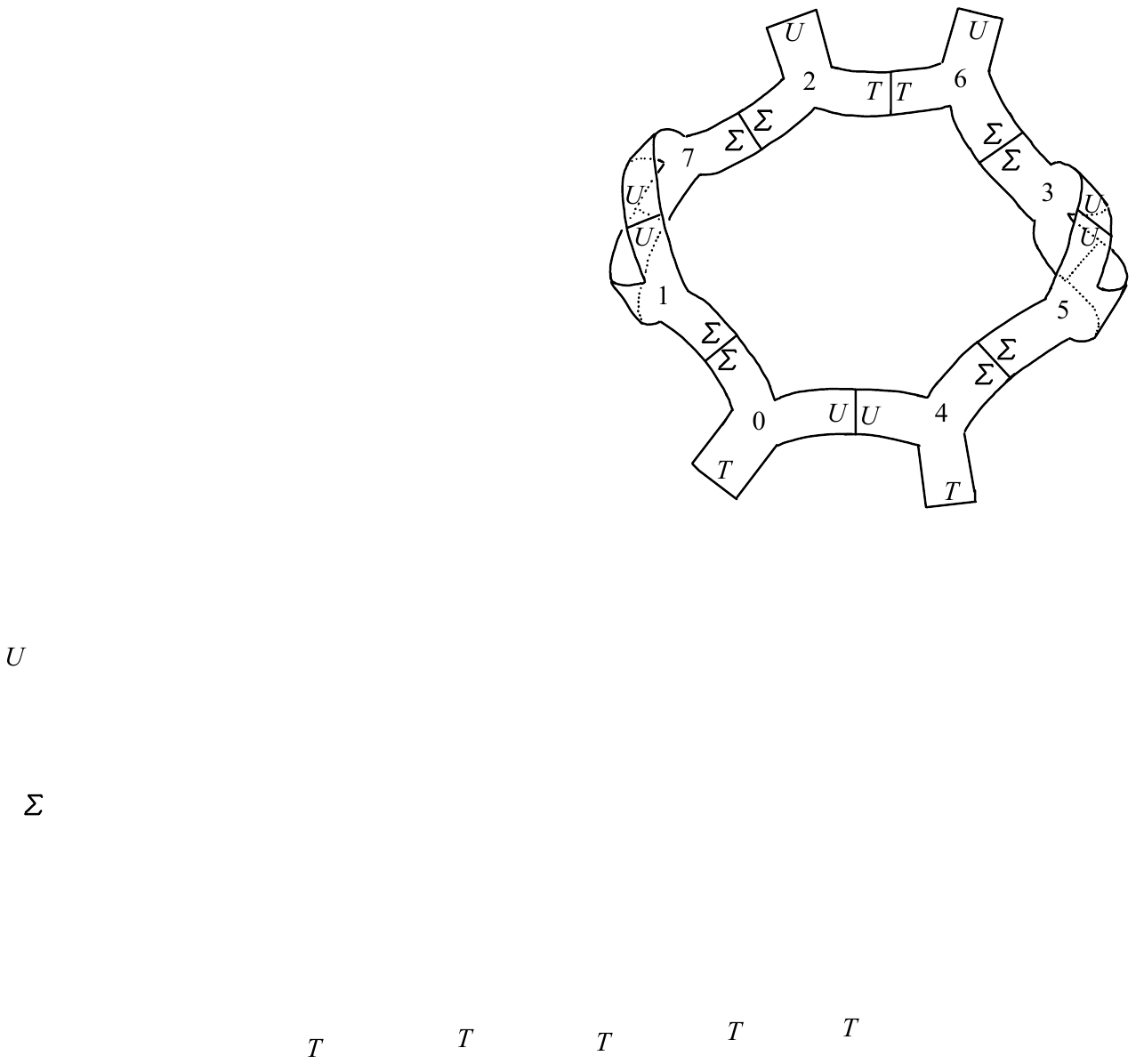}\linebreak
$M_2=\abs{\cO_2}$\linebreak\linebreak
{\it Figure 6}
\end{center}

\begin{theorem}\label{t2}
The bordered surfaces $M_1$ and $M_2$ depicted in Figure~6 are isospectral with Neumann boundary conditions, but not with Dirichlet boundary conditions.
\end{theorem}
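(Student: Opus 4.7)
The Neumann isospectrality is not really the content; it is inherited from the earlier construction. The substance is the Dirichlet non-isospectrality, which I would prove by adapting the Fefferman argument of Section~\ref{Fefferman}, with the extra symmetry of the $Y$-tile playing the role that the reflection/rotation symmetries of $C$ and $S$ play in Fefferman's example.

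\textbf{Part I (Neumann isospectrality).} I would invoke the Gassmann--Sunada triple $(G,\Gamma_1,\Gamma_2)$ and the surjection $\varphi:\pi_1(\cO_0)\to G$ exactly as in Section~\ref{Construction}; only the underlying space of the base orbifold $\cO_0$ has been replaced by a more symmetric tile, so the Schreier graphs \eqref{SchreierGraphs} and hence the gluing combinatorics are identical. B\'erard's orbifold version of Sunada's theorem then gives isospectrality of $\cO_1$ and $\cO_2$ as orbifolds, and Remark~\ref{RmkOnBoundaryConds} converts this into Neumann isospectrality of $M_i=|\cO_i|$. The explicit transplantation of Section~\ref{Transplantation} also carries over verbatim.

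\textbf{Part II (Dirichlet non-isospectrality).} Here is the Fefferman-style adaptation. \emph{Step~1.} The $Y$-tile has an evident reflection $\rho$; because $\rho$ preserves the labels of the three mirror arcs, the tile-by-tile action is compatible with every gluing and descends to a global involutive isometry $\tau_i:M_i\to M_i$ for $i=1,2$. \emph{Step~2 (key structural claim).} Using the immersed model of $M_1$ and the symmetric picture of $M_2$ in Figure~6, I would check from the Schreier graphs that $\tau_2$ is a genuine reflection of the orientable surface $M_2$: its fixed set $E\subset M_2$ is a one-dimensional curve that \emph{separates} $M_2$ into two isometric halves $L$ and $\tau_2(L)$. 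By contrast $\tau_1$ is a half-turn-type involution of the nonorientable $M_1$, whose fixed set is zero-dimensional (isolated points), so it does not separate $M_1$, although $M_1=L'\cup\tau_1(L')$ for some fundamental domain $L'$ that is pointwise isometric to $L$ via the common tile decomposition. \emph{Step~3.} Normalized lowest Dirichlet eigenfunctions $f_i$ on $M_i$ are positive in the interior by Courant, and by simplicity of the lowest eigenvalue (positivity rules out $L^2$-orthogonality) one has $f_i\circ\tau_i=f_i$.

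\emph{Step~4 (Rayleigh-quotient comparison).} The $\tau_2$-invariance of $f_2$ and the fact that $\tau_2$ is a reflection in $E$ imply that $(f_2)|_L$ is a positive eigenfunction for the mixed boundary-value problem on $L$ with Dirichlet conditions on $\partial M_2\cap L$ and Neumann conditions on $E$; by the variational characterization it realizes the lowest mixed eigenvalue $\lambda_{\mathrm{mixed}}(L)$, and $\lambda_2=\lambda_{\mathrm{mixed}}(L)$. On the other hand, $\tau_1$-invariance of $f_1$ gives $\lambda_1=R_{L'}\!\bigl((f_1)|_{L'}\bigr)$, and transporting $(f_1)|_{L'}$ to $L$ via the canonical tile identification produces a function which still vanishes on $\partial M_2\cap L$ (since the corresponding arcs, being half-edges in the Schreier graph, are boundary arcs in $M_1$ as well) and therefore lies in $H^1_{\mathrm{mixed}}(L)$. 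Hence
\[
\lambda_2=\lambda_{\mathrm{mixed}}(L)\;\le\;R_L\!\bigl((f_1)|_{L'}\bigr)=\lambda_1.
\]
\emph{Step~5 (strictness).} If equality held, the transported $(f_1)|_{L'}$ would also minimize the mixed problem and so equal a scalar multiple of $(f_2)|_L$. By real analyticity of Laplace eigenfunctions, this forced agreement would propagate across every tile that occurs in both pictures with the same local gluing. But the Schreier graphs \eqref{SchreierGraphs} differ precisely in how neighboring tiles are identified, so there is at least one arc which is a free Dirichlet arc on one surface and an interior gluing arc on the other; on this arc, one of $f_1,f_2$ must vanish while the analytic continuation of the other is strictly positive, a contradiction. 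Therefore $\lambda_2<\lambda_1$ and $M_1,M_2$ are not Dirichlet isospectral.

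\textbf{Expected main obstacle.} The only non-routine step is Step~2: correctly identifying the fixed-set structure of the two induced involutions, in particular checking, from Figure~6 and the Schreier graphs, that $\tau_2$ really gives a separating reflection curve on the orientable $M_2$ while $\tau_1$ has only isolated fixed points on $M_1$. Once this geometric dichotomy (reflection versus half-turn, paralleling $C$ versus $S$ in Fefferman's argument) is pinned down, Steps 3--5 are essentially forced, and the concluding analytic-continuation contradiction in Step~5 is a local computation at a well-chosen pair of tiles.
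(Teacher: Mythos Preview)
Your Step~1 is literally false: a nontrivial reflection of a $Y$-tile fixes one arm and swaps the other two, so $\rho$ cannot preserve all three labels $\Sigma,T,U$. One can salvage global involutions by composing the tile reflection (say swapping $T\leftrightarrow U$) with a Schreier-graph automorphism interchanging the $t$- and $u$-edges; such automorphisms do exist for both coset spaces. But then your Step~2 dichotomy comes out \emph{backwards}. For $G/\Gamma_1$ the required graph automorphism is $(2\;6)(3\;7)$, which fixes four tiles, so the resulting $\tau_1$ has a one-dimensional fixed locus (the reflection axis inside each fixed tile) and is the ``reflection''; for $G/\Gamma_2$ the automorphism $(0\;2)(1\;7)(4\;6)(3\;5)$ moves every tile, making $\tau_2$ the ``half-turn''. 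More seriously, Step~4 collapses independently of this: a fundamental domain for a reflection-type involution consists of half-tiles along the axis together with whole tiles off it, while a fundamental domain for the half-turn consists of four whole tiles, so there is no ``canonical tile identification'' of $L'$ with $L$; and even ignoring that, the free boundary arcs of $M_1$ and $M_2$ lie on \emph{different} tiles (for instance $u$ fixes cosets $0,4$ in $G/\Gamma_1$ but $2,6$ in $G/\Gamma_2$), so the transported function has no reason to vanish where you need it to.

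The paper repairs all of this by inserting a preliminary quotient before the Fefferman step. It first uses the tile-\emph{permuting} involution $\rho_i\colon j\mapsto j+4$ (left multiplication by the central element $s^4\in G$); on both $M_1$ and $M_2$ this has one-dimensional fixed set, and the quotients $Q_i=M_i/\langle\rho_i\rangle$ share the \emph{same} underlying four-tile surface $S=|Q_1|=|Q_2|$, differing only in which boundary edges are mirrors. Assuming $\lambda_1=\lambda_2$ for contradiction, the lowest Dirichlet eigenfunctions $\varphi,\psi$ descend to lowest eigenfunctions on $S$ for the two mixed problems \eqref{NeumannOn6Tand6U} and \eqref{NeumannOn6Tand4U}. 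Only now, on this genuinely common space $S$, does the $Y$-tile symmetry enter: it furnishes a reflection $\tau$ of $Q_1$ (with a further half-quotient $\Omega$) and a rotation-by-$\pi$ of $Q_2$, and the Fefferman comparison on $\Omega$ forces $\varphi=\psi$ on all of $S$. The contradiction is then local: $\varphi$ is Neumann but $\psi$ is Dirichlet on the edge $6U$. So the reflection/rotation dichotomy you are aiming for does appear, but one $\Z/2$-quotient below where you placed it, and with the roles of the two surfaces reversed.
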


\begin{proof} The Neumann isospectrality was established in section~\ref{Construction}; it remains only to show that Dirichlet isospectrality fails. Suppose then that $M_1$ and $M_2$ are Dirichlet isospectral.

Note that $M_1$ has an involutive isometry $\rho_1$ whose effect is to interchange tiles $i$ and $i-4$ (mod 8) (this involutive symmetry is best visualized in the immersed picture in Figure~6). The fixed-point set of $\rho_1$ consists of the common $T$-edges and $U$-edges of tiles 2 and 6. (Although the common $T$-edge of tiles 1 and 7 and the common $T$-edge of tiles 3 and 5 coincide in the picture of $M_1$ immersed, they are of course distinct and are interchanged by $\rho_1$. Thus $\rho_1$ is not quite a ``reflection in a plane perpendicular to the paper'' as the picture would suggest.) The quotient orbifold $Q_1 = M_1/\langle \rho_1\rangle$ is shown in Figure~7. Also, $M_2$ has a reflection symmetry $\rho_2$ whose effect is to interchange tiles $i$ and $i-4$ (mod 8); the fixed point set consists of the common $T$-edge of the tiles 2 and 6 and the common $U$-edge of tiles 0 and 4. The quotient orbifold $Q_2 = M_2/\langle \rho_2\rangle$ is also shown in Figure~7. Note that $Q_1$ and $Q_2$ have the same underlying space $S = |Q_1| = |Q_2|$.

\begin{center}
\includegraphics[width=6.25in]{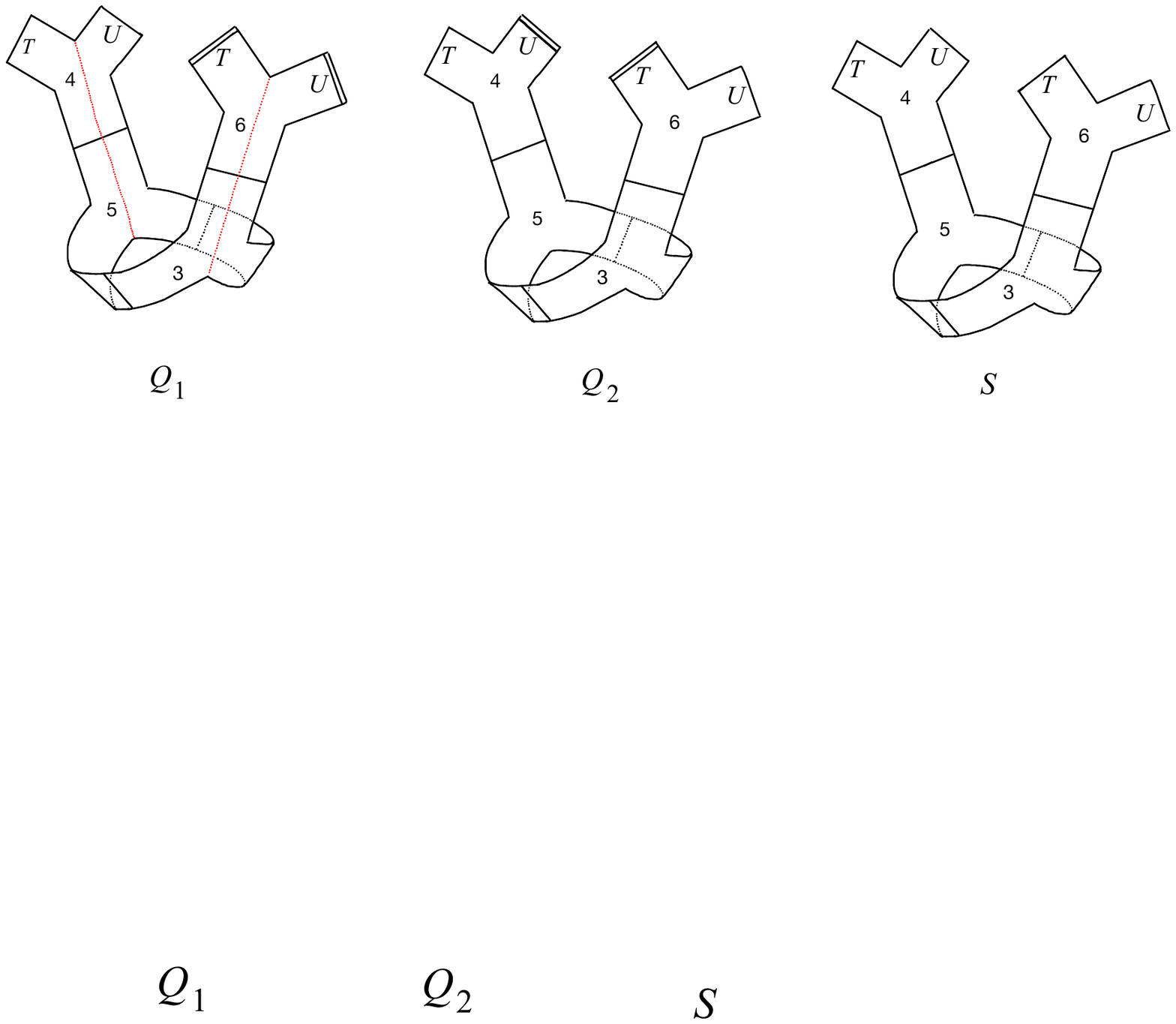}
{\it Figure 7}
\end{center}

Consider a Dirichlet eigenfunction $\varphi$ for the lowest Dirichlet eigenvalue $\lambda$ of $M_1$. By the Courant nodal domain theorem \cite{C}, by replacing $\varphi$ by $-\varphi$ if necessary, we can assume that $\varphi$ assumes only positive values on the interior of $M_1$. Thus the lowest eigenvalue $\lambda $ has multiplicity one, since no two everywhere-positive functions could be orthogonal. But $\rho_1$ is an isometry, so $\varphi  \circ \rho_1$ is also a $\lambda $-eigenfunction; hence $\varphi\circ \rho_1 = \varphi$, i.e., $\varphi$ is $\rho_1$-invariant. Thus $\varphi$ is a Dirichlet eigenfunction on the orbifold $Q_1$, that is, an eigenfunction on the underlying surface $S = |Q_1|$ in Figure~7 for the following  mixed boundary conditions:

\begin{equation}\label{NeumannOn6Tand6U}
\begin{cases}
\text{Neumann conditions on the boundary edges $6T$ and $6U$,\quad}\\ \text{Dirichlet boundary conditions on all other boundary edges.}
\end{cases}
\end{equation}

\noindent Moreover, $\varphi$ must be a lowest eigenfunction on $S$ for this mixed boundary value problem, since it is everywhere positive on the interior. Thus $\lambda$ is the lowest eigenvalue on $S$ for the mixed problem \eqref{NeumannOn6Tand6U}.

Since $M_1$ and $M_2$ are assumed Dirichlet isospectral, $\lambda$ is also the lowest Dirichlet eigenvalue of $M_2$. Let $\psi$ be a $\lambda$-eigenfunction on $M_2$. By the same argument, $\psi$ is $\rho_2$-invariant, so is a Dirichlet eigenfunction on the orbifold $Q_2$, i.e., an eigenfunction on the underlying space $S$ for the following mixed boundary conditions:

\begin{equation}\label{NeumannOn6Tand4U}
\begin{cases}\text{Neumann conditions on the boundary edges $6T$ and $4U$,}\\ \text{Dirichlet boundary conditions on all other boundary edges.}
\end{cases}
\end{equation}

\noindent Thus $\lambda $ is also the lowest eigenvalue on $S$ for the problem \eqref{NeumannOn6Tand4U}.  We conclude that the mixed eigenvalue problems \eqref{NeumannOn6Tand6U} and \eqref{NeumannOn6Tand4U} on $S$ have the same lowest eigenvalue $\lambda $.

Now the orbifold $Q_1$ itself has an involutive symmetry $\tau$ (reflection in the red line in the drawing of $Q_1$ in Figure~7), and the quotient orbifold $Q = Q_1/\langle \tau \rangle$ is shown in Figure~8 together with its underlying space $\Omega  = |Q|$. The surface $S$ can be recovered by doubling the planar domain $\Omega $ along the boundary edges $C$ and $E$.
\begin{center}
\includegraphics[width=5.5in]{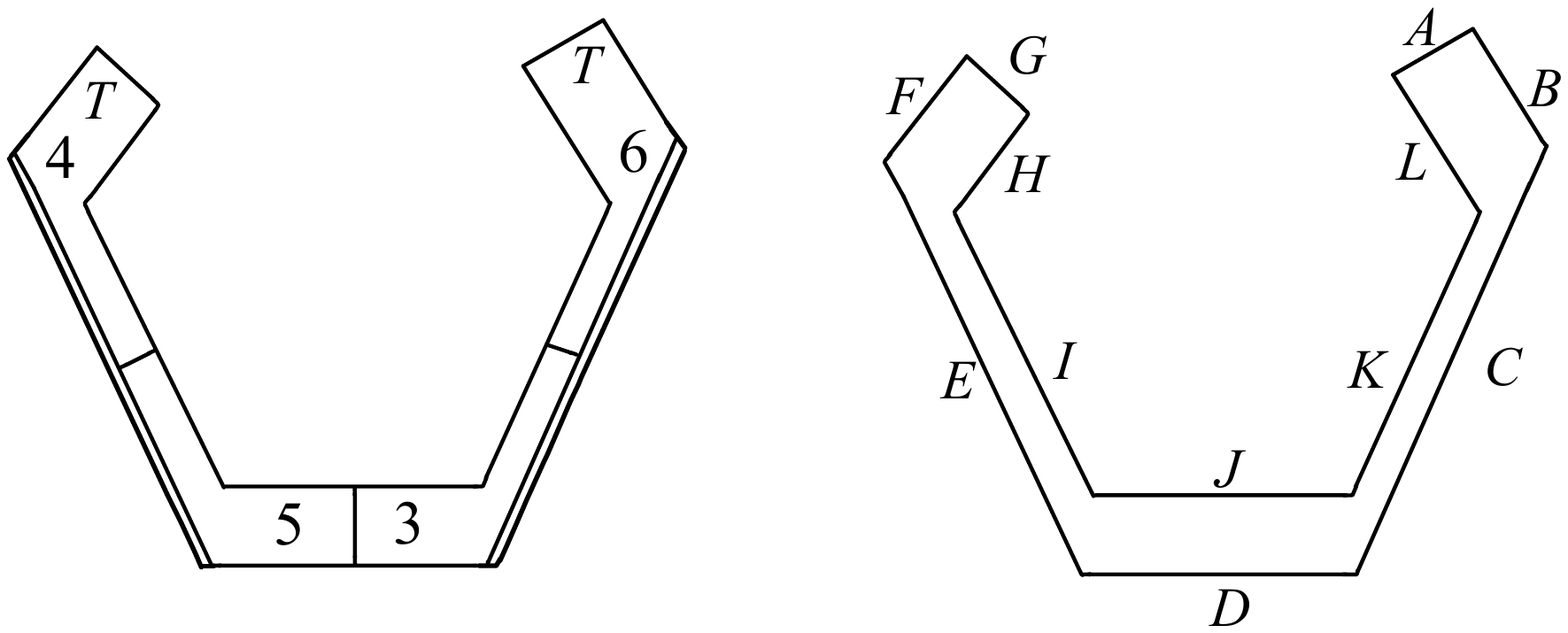}
$Q\qquad\qquad\qquad\qquad\qquad\qquad\qquad\qquad\qquad\qquad\qquad\qquad\Omega$\linebreak
\includegraphics[width=2.5in]{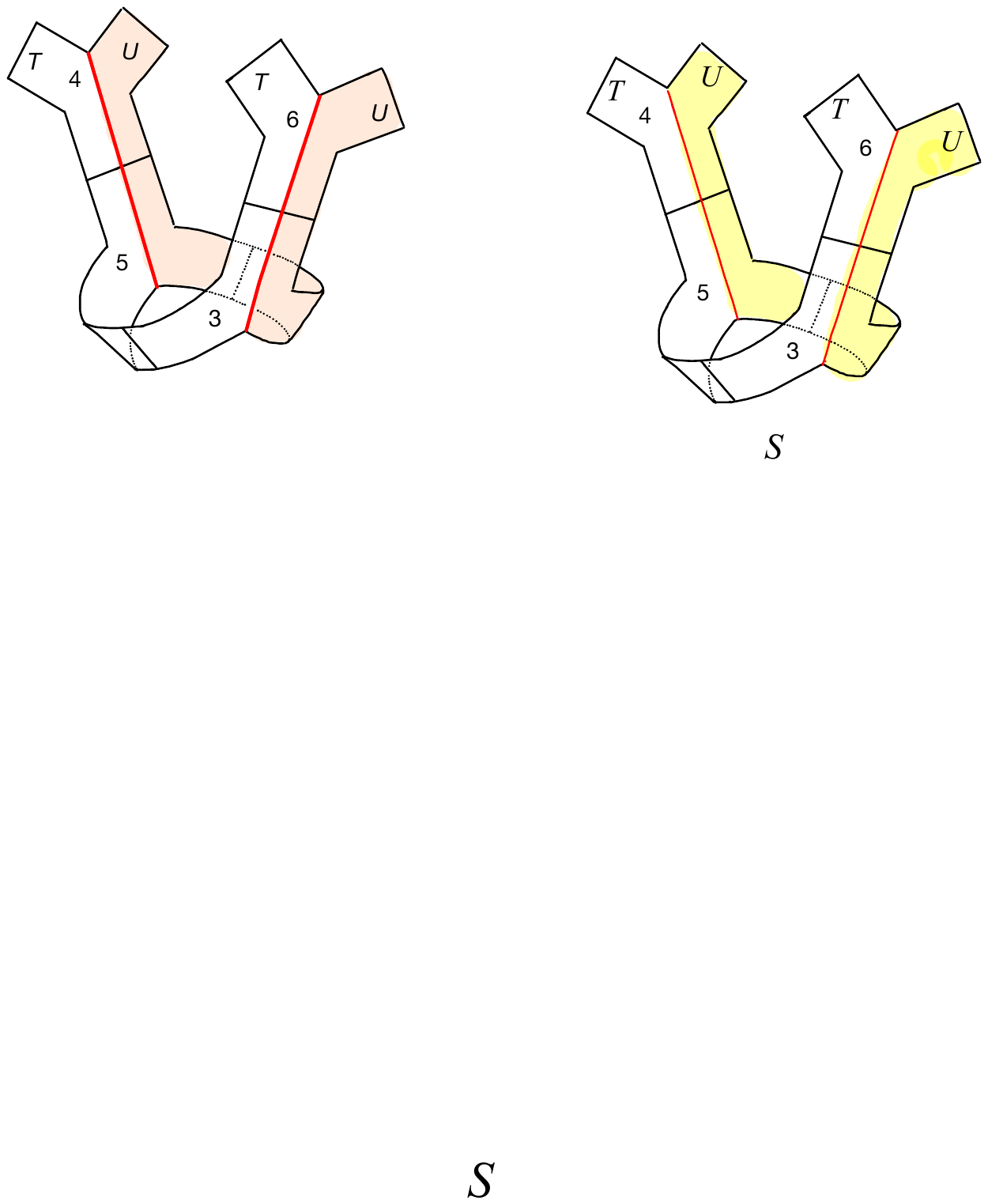}\linebreak
{\it Figure 8}
\end{center}

As before, the lowest eigenfunction $\varphi$ on $Q_1$ must be $\tau$-invariant, so it must be a lowest eigenfunction on the quotient orbifold $Q$, i.e., a lowest eigenfunction on $\Omega  = |Q|$ for the following boundary conditions:

\begin{equation}\label{Dagger}
\begin{cases}
\text{Neumann boundary conditions on edges $A$, $C$  and $E$,}\\
\text{Dirichlet conditions on all other boundary edges.\quad}
\end{cases}
\end{equation}

\noindent Thus the lowest eigenvalue of $\Omega $ for the boundary condition \eqref{Dagger} is also $\lambda$.

Now the lowest eigenfunction $\varphi$ on $\Omega $ for the mixed conditions \eqref{Dagger} achieves the infimum of the Rayleigh quotients
\[\inf_{f\in H^1_{\operatorname{mixed}}}  \frac{(df,df)_\Omega}{(f,f)_\Omega},\]
where $f$ ranges over the $H^1$-completion $H^1_{\operatorname{mixed}}$ of the space of smooth functions on $\Omega$ supported away from all boundary edges except $A$, $C$ and $E$.

Now let $\psi$ be a lowest Dirichlet eigenfunction on $Q_2$. Consider its restriction to the unshaded copy of $\Omega$ in Figure~8 which forms the left half of the surface $S$. Figure~7 shows that $\psi$ satisfies Dirichlet conditions on all boundary edges except $A$, $C$ and $E$, since all other edges lie in the orbifold boundary $\partial Q_2$; thus $\psi \in H^1_{\operatorname{mixed}}$. Now $\psi$ is an eigenfunction on $S$ for the problem~\eqref{NeumannOn6Tand4U}, so $\frac{(d\psi,d\psi)_S}{(\psi,\psi)_S} = \lambda$. Also, note that the orbifold $Q_2$ has an involutive ``rotation by $\pi$'' symmetry, and $\psi $ must be invariant under this symmetry. Thus
\[\frac{(d\psi ,d\psi )_\Omega }{(\psi,\psi)_\Omega}= \frac{\frac12(d\psi,d\psi)_S}{\frac12(\psi,\psi)_S }=\frac{(d\psi ,d\psi )_S}{(\psi,\psi)_S} = \lambda.\]
Thus $\psi$ realizes the infimum
\[\lambda = \inf_{f\in H^1_{\operatorname{mixed}}}\frac{(df,df)_\Omega}{(f,f)_\Omega},\]
so $\psi$ is a lowest eigenfunction on $\Omega$ for the boundary conditions~\eqref{Dagger}. Since the multiplicity of the lowest eigenvalue is one, it follows that $\psi$ and $\varphi$ agree on $\Omega $ (after multiplying $\psi$ by a nonzero scalar).  Since $\psi$ and $\varphi$ agree on an open set, $\psi=\varphi$ on all of $S$ by the maximum principle.

Reference to Figure~7 shows that $\varphi $ satisfies Neumann conditions on edge $6U$ of $S$, while $\psi $ satisfies Dirichlet conditions on that edge. Now consider a small disk of radius $\varepsilon $ centered at a point in the interior of the boundary edge $6U$. Since $S$ is flat, this disk is isometric to the open half-disk $H = \{(x,y) \in \R^2 : x^2+y^2 < \varepsilon, y \ge 0\}$, so we can view $\varphi$ as an eigenfunction on $H$. Since $\varphi$ satisfies Neumann boundary conditions on the $x$-axis, we can extend $\varphi$ by reflection to an eigenfunction $\varphi_1$ on the open disk $B= \{(x,y) \in \R^2:x^2+y^2 < \varepsilon\}$ that is invariant under reflection in the $x$-axis.  But $\varphi $ also satisfies Dirichlet conditions on the $x$-axis, so we can also continue $\varphi $ to an eigenfunction $\varphi_2$ on $B$ that is anti-invariant under reflection in the $x$-axis.  But $\varphi_1$ and $\varphi_2$ agree on an open set, so $\varphi_1 = \varphi_2$, a contradiction. Thus $M_1$ and $M_2$ are not Dirichlet isospectral, and Theorem~\ref{t2} is proved.
\end{proof}

\section{Inaudible singularities; concluding remarks}\label{InaudSing}

We conclude with a few remarks, references to related results, and open questions.

\begin{remarks} The results and methods above lead to some other interesting phenomena, and highlight several natural questions.
\begin{enumerate}
\item Some of the examples of isospectral surfaces in \cite{BCDS} show that one cannot hear whether there is a singularity in the interior: some of the isospectral pairs have the property that $M_1$ has an interior cone singularity, while $M_2$ does not.

If one constructs isospectral bordered surfaces $M_1$ and $M_2$ using the Gerst Gassmann-Sunada triple $(G,\Gamma _1,\Gamma _2)$ of section~\ref{Construction} with generators $st$, $t$, and $tu$ but using the triangular fundamental tile $T$ in Figure~9, then $M_1$ and $M_2$ exhibit this same phenomenon. The surfaces thus constructed are shown in Figure~9. Both surfaces are flat annuli with polygonal boundary, but $M_2$ has a cone singularity in the interior while $M_1$ does not ($M_1$ is a manifold with corners, while $M_2$ is more singular).  Similarly, if one constructs bordered surfaces $M_1$ and $M_2$ using the triangular tile $T$ but the generators $\sigma $, $t$ and $u$ as in section~\ref{Construction}, then $M_1$ has a single interior cone singularity, while $M_2$ has two cone singularities. Thus one cannot hear the nature of singularities.

\item We note that the content of Theorem~\ref{t1} can be expressed as the assertion that one cannot hear the vanishing of the first Stiefel-Whitney class $w_1$ of a Riemannian manifold with boundary.  It is then natural to ask: for an orientable Riemannian manifold, can one hear the vanishing of the second Stiefel-Whitney class $w_2$?  That is, can one infer from the Laplace spectrum whether or not the manifold admits a Spin structure?  This question was answered negatively by Roberto Miatello and Ricardo Podest\'a in \cite{MiatelloPodesta}.

\item Recall that an \emph{orbifold chart} on a space $X$ (see \cite{ALR}) is given by a connected open subset $\wt{U}$ of $\R^n$, a finite group $G$ of diffeomorphisms of $U$, and a $G$-invariant map $\varphi:\wt{U}\to X$ that induces a homeomorphism of the orbit space $G\backslash\wt{U}$ with an open subset of $X$; an orbifold is then a space equipped with a cover of orbifold charts that satisfy a suitable compatibility condition.  An orbifold is \emph{locally orientable} if in each such orbifold chart, the action of the group $G$ on $\wt{U}$ is by orientation-preserving diffeomorphisms of $\wt{U}$.  In contrast with our main result, it was recently shown by  Sean Richardson and Elizabeth Stanhope \cite{RichardsonStanhope} that one \emph{can} hear local orientability of a Riemannian orbifold.
\end{enumerate}
\end{remarks}

Finally, we note that our main result and the example depicted in Figure~9 call attention to two open problems:
\begin{itemize}
\item Can a \emph{closed} orientable Riemannian manifold be isospectral to a \emph{closed} nonorientable Riemannian manifold?

\item Can a Riemannian orbifold with nonempty singular set be isospectral to a Riemannian manifold?
\end{itemize}

%Finally, we remark that the bordered surfaces $M_1$ and $M_2$ constructed in section~\ref{Construction} are Neumann isospectral for the Hodge Laplacian on forms. %We will return to this in detail in a final version of this paper.

\begin{center}
\includegraphics[height=2.25in]{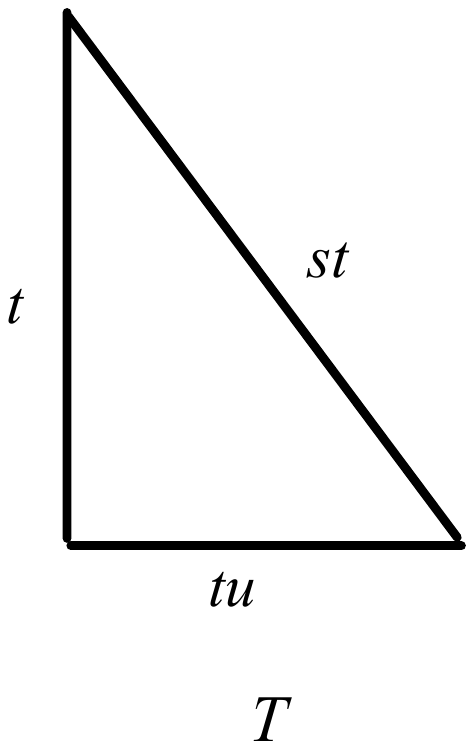}
\includegraphics[width=4.5in]{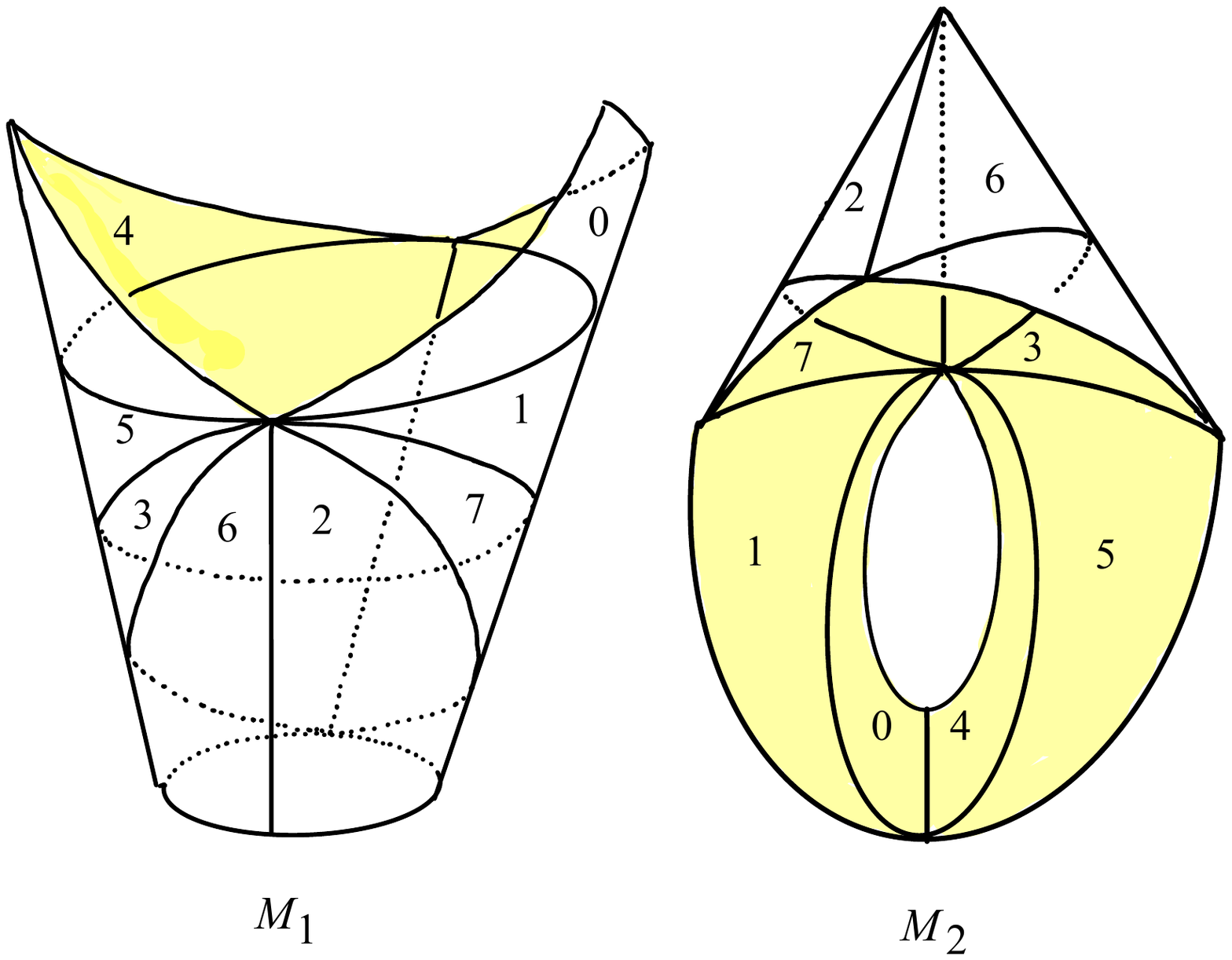}
{\it Figure 9}
\end{center}

\newpage

\vfill
\begin{flushleft}
%adresse de l'auteur
\footnotesize
Pierre B\'erard\\
Universit\'{e} Grenoble Alpes and CNRS\\
Institut Fourier, CS 40700\\ 38058 Grenoble cedex 9, France.\\
\smallskip
email:pierrehberard@gmail.com\\
\medskip
David L. Webb\\
Darmouth College\\
Hanover, NH 03755 (USA)\\
\smallskip
email:David.L.Webb@Dartmouth.edu
\end{flushleft}


\begin{thebibliography}{10}

\bibitem{ALR} Adem, A., Leida, J., and Ruan, Y.
\newblock {\it Orbifolds and stringy topology.}
\newblock Cambridge Tracts in Mathematics \textbf{171}.
\newblock Cambridge University Press, Cambridge, 2007.

\bibitem{Be1} B\'{e}rard, P.
\newblock {\it Spectral geometry: direct and inverse problems.}
\newblock Lecture Notes in Mathematics, vol. \textbf{1207}.
\newblock Springer, Berlin Heidelberg New York, 1986.

\bibitem{Be2} B\'{e}rard, P.
\newblock Transplantation et isospectralit\'{e} I.
\newblock {\it Math. Ann.} \textbf{292} (1992), 547--559.

\bibitem{Be3} B\'{e}rard, P.
\newblock  Domaines plans isospectraux \`{a} la Gordon-Webb-Wolpert: une
    preuve \'{e}l\'{e}mentaire.
\newblock {\it Afrika Matematika} \textbf{3} (1) (1992), 135--146.  Also {\it S\'eminaire de Th\'eorie Spectrale et G\'eom\'etrie}, Univ. Grenoble I, Saint-Martin-d'H\`eres \textbf{10} (1991–1992), 131–-142.

\bibitem{BerardPesce} B\'erard, P., and Pesce, H.
\newblock Construction de vari\'et\'es isospectrales autour du th\'eor\`eme de T.~Sunada. (French) [Construction of isospectral manifolds using the theorem of T. Sunada]
\newblock {\it Progress in inverse spectral geometry}, 63--83, Trends Math., Birkh\"auser, Basel, 1997.

\bibitem{BW} B\'{e}rard, P. and Webb, D.
\newblock On ne peut pas entendre l'orientabilit\'e d'une surface.
\newblock  {\it C. R. Acad. Sci. Paris S\'er. I Math.} \textbf{320} (1995), no. 5, 533--536.

\bibitem{BW1} B\'{e}rard, P. and Webb, D.
\newblock One can't hear orientability of surfaces.
\newblock  {\it MSRI Preprint} No.~\textbf{005-95} (1995), 24 p.

\bibitem{Brooks} Brooks, R.
\newblock The Sunada Method.
\newblock {\it Tel Aviv Topology Conference: Rothenberg Festschrift} (1998), 25--35,
{\it Contemp. Math.} \textbf{231}, Amer. Math. Soc., Providence, RI, 1999.

\bibitem{BrooksMonthly} Brooks, R.
\newblock Constructing isospectral manifolds.
\newblock {\it Amer. Math. Monthly} \textbf{95} (1988), no. 9, 823--839.

\bibitem{Bu1} Buser, P.
\newblock Cayley graphs and isospectral domains.
\newblock In {\it Proc. Taniguchi Symp. Geometry and Analysis on Manifolds}, Sunada, T. (ed.) 1987.
\newblock Lecture Notes in Mathematics, vol. \textbf{1339}, 64--77.
\newblock Springer, Berlin Heidelberg New York, 1988.

\bibitem{Bu2} Buser, P.
\newblock Isospectral Riemann surfaces.
\newblock  {\it Ann. Inst. Fourier} \textbf{36} (2) (1986), 167--192.

\bibitem{BCDS} Buser, P., Conway, J., Doyle, P., and Semmler, K.-D.
\newblock  Some planar isospectral domains.
\newblock   {\it Internat. Math. Res. Notices} 1994, no. 9, 391ff., approx. 9 pp.

\bibitem{BuserBook} Buser, P.
\newblock Geometry and spectra of Riemann surfaces.
\newblock Birkha\"user, Boston, 1992.

\bibitem{C} Chavel, I.
\newblock {\it Eigenvalues in Riemannian geometry.}
\newblock  Academic Press, London, 1984.

\bibitem{CR} Curtis, C.  and Reiner, I.
\newblock {\it Methods of representation theory with applications to finite groups and orders, vol. I.}
\newblock Wiley, New York, 1981.

\bibitem{DI}
\newblock DeMeyer, F., and Ingraham, E.
\newblock {\it Separable algebras over commutative rings.}
\newblock Lecture Notes in Mathematics, vol. \textbf{181}.
\newblock Springer, Berlin Heidelberg New York, 1971.

\bibitem{DoyleRossetti}
\newblock Doyle, P., and Rossetti, J. P.
\newblock Isospectral hyperbolic surfaces have matching geodesics.
\newblock {\it New York J. Math.} \textbf{14} (2008), 193--204.

\bibitem{FaMa} Farb, B. and Margalit, D.
\newblock {\it A primer on mapping class groups}.
\newblock Princeton University Press, Princeton and Oxford, 2011.

\bibitem{Ga} Gassmann, F.
\newblock Bemerkung zu der vorstehenden Arbeit von Hurwitz.
\newblock {\it Math. Z.} \textbf{25} (1926), 124--143.

\bibitem{Ge} Gerst, I.
\newblock On the theory on $n$-th power residues and a conjecture of Kronecker.
\newblock {\it Acta Arithmetica} \textbf{17} (1970), 121--139.

\bibitem{Sunada20YrsLater} Gordon, C.
\newblock Sunada's isospectrality technique: two decades later.
\newblock {\it Spectral analysis in geometry and number theory}, 45--58, {\it Contemp. Math.}, \textbf{484}, Amer. Math. Soc., Providence, RI, 2009.

\bibitem{Gu} Guralnick, R.
\newblock Subgroups inducing the same permutation representation.
\newblock {\it J. Algebra} \textbf{81} (1983), 312--319.

\bibitem{GWW} Gordon, C., Webb, D. and Wolpert, S.
\newblock Isospectral plane domains and surfaces via Riemannian orbifolds.
\newblock {\it Invent. Math.} \textbf{110} (1992), 1--22.

\bibitem{Herbrich} Herbrich, P.
\newblock On inaudible properties of broken drums --- Isospectrality with mixed Dirichlet-Neumann boundary conditions.
\newblock  arXiv:1111.6789v3.

\bibitem{Hubbard} Hubbard, J.
\newblock {\it Teichm\"uller theory and applications to geometry, topology, and dynamics,} vol. 1.
\newblock Matrix Editions, Ithaca, New York, 2006.

\bibitem{JamesLiebeck} James, G. and Liebeck, M.
\newblock {\it Representations and characters of groups.}
\newblock Cambridge University Press, Cambridge, 1993.

\bibitem{K} Kac, M.
\newblock Can one hear the shape of a drum?.
\newblock {\it Amer. Math. Monthly} \textbf{73} (1966), 1--23.

\bibitem{L} Lenstra, H.
\newblock Grothendieck groups of abelian group rings.
\newblock {\it J. Pure Appl. Algebra} \textbf{20} (1981), 173--193.

\bibitem{MiatelloPodesta} Miatello, R. and Podest\'a, R.
\newblock Spin structures and spectra of $\Z_2^{\;k}$-manifolds.
\newblock {\it  Math. Z.} \textbf{247} (2004), no. 2, 319--335.

\bibitem{PP} Papadopoulos, A. and Penner, R.
\newblock Hyperbolic metrics, measured foliations and pants de\-com\-positions for non-orientable surfaces.
\newblock {\it Asian J. Math.} \textbf{20} (2016), no. 1, 157--182.

\bibitem{P} Perlis, R.
\newblock On the equation $\zeta _K(s) = \zeta _{K'}(s)$.
\newblock  {\it J. Number Theory} \textbf{9} (1977), 342--360.

\bibitem{RichardsonStanhope} Richardson, S. and Stanhope, E.
\newblock You can hear the local orientability of an orbifold.
\newblock {\it Differential Geom. Appl.} \textbf{68} (2020), 101577, 7 pp.

\bibitem{Sc} Scott, P.
\newblock The geometries of 3-manifolds.
\newblock  {\it Bull. London Math. Soc.} \textbf{15} (1983), 401--487.

\bibitem{Se} Serre, J.-P.
\newblock {\it Linear representations of finite groups.}
\newblock  Springer, Berlin Heidelberg New York, 1977.

\bibitem{Su} Sunada, T.
\newblock Riemannian coverings and isospectral manifolds.
\newblock  {\it Ann. Math.} \textbf{121} (1985), 248--277.

\bibitem{T} Thurston, W.
\newblock {\it The geometry and topology of 3-manifolds.}
\newblock Mimeographed lecture notes.
\newblock  Princeton University, 1976--79.

\end{thebibliography}
\end{document}